\documentclass[11pt]{amsart}

\usepackage[a4paper,margin=2.7cm]{geometry}
\usepackage[T1]{fontenc}
\usepackage{lmodern}
\usepackage{microtype}
\usepackage{amsmath,amssymb,amsthm,amsfonts,mathtools}
\usepackage{enumitem}
\usepackage{mathrsfs}
\usepackage{hyperref}
\hypersetup{hidelinks}

\numberwithin{equation}{section}

\newtheorem{theorem}{Theorem}[section]
\newtheorem{lemma}[theorem]{Lemma}
\newtheorem{proposition}[theorem]{Proposition}
\newtheorem{corollary}[theorem]{Corollary}
\newtheorem{question}[theorem]{Question}

\theoremstyle{definition}
\newtheorem{definition}[theorem]{Definition}

\theoremstyle{remark}
\newtheorem{remark}[theorem]{Remark}
\newtheorem{example}[theorem]{Example}

\newcommand{\R}{\mathbb R}
\newcommand{\C}{\mathbb C}
\newcommand{\K}{\mathbb K}
\newcommand{\N}{\mathbb N}
\newcommand{\Id}{\mathrm{Id}}
\newcommand{\wstar}{w^*}
\newcommand{\Exp}{\mathsf E}
\newcommand{\Prob}{\mathsf P}

\DeclareMathOperator{\spanop}{span}

\title[Finite-codimensional subspaces of Daugavet spaces]
{Finite-codimensional subspaces of Daugavet spaces:\\
projection constants and minimal projections}

\author[T.~Kania]{Tomasz Kania}
\address[T.~Kania]{Mathematical Institute, Czech Academy of Sciences, \v{Z}itn\'a 25, 115 67 Praha 1, Czech Republic\newline
and Institute of Mathematics and Computer Science, Jagiellonian University, {\L}ojasiewicza 6, 30-348 Krak\'ow, Poland}
\email{kania@math.cas.cz, tomasz.marcin.kania@gmail.com}
\thanks{IM CAS (RVO 67985840).}

\author[G.~Lewicki]{Grzegorz Lewicki}
\address[G.~Lewicki]{Institute of Mathematics and Computer Science, Jagiellonian University, {\L}ojasiewicza 6, 30-348 Krak\'ow, Poland}
\email{Grzegorz.Lewicki@im.uj.edu.pl}

\subjclass[2020]{Primary 41A65, 46B04; Secondary 46B20, 46E15}
\keywords{Minimal projection, projection constant, Daugavet property, finite-codimensional subspace, weak$^*$ continuity, $C[0,1]$}

\begin{document}

\begin{abstract}
Over the real or complex field, we establish a duality formula for projection constants of finite-codimensional subspaces of Banach spaces with the Daugavet property. If
\[
Y=\bigcap_{j=1}^n \ker f_j \subset X,
\qquad
W=\spanop\{f_1,\dots,f_n\} \subset X^*,
\]
then
\[
\lambda(Y,X)=1+\lambda(W,X^*),
\]
and minimal projections onto $Y$ correspond exactly to weak$^*$-continuous minimal projections onto $W$. This yields, in particular, a complete description of the hyperplane case: every hyperplane has projection constant $2$, and $\ker f$ admits a minimal projection if and only if $f$ attains its norm.

We then specialise to the real space $X=C[0,1]$. Our second ingredient is a transfer principle from duplication-stable finite-dimensional subspaces of $\ell_1^N$ to piecewise-constant subspaces of $L_1[0,1]\subset M[0,1]=C[0,1]^*$. For the regular symmetric spaces constructed by Chalmers and the second-named author and the second named author and Prophet, respectively, the transferred subspaces retain their projection constants but admit no weak$^*$-continuous minimal projections. Passing to annihilators yields finite-codimensional subspaces of the real space $C[0,1]$ for which the infimum defining the projection constant is not attained.

As a consequence, for every $\Lambda\in[2,\infty)$ there exists a finite-codimensional subspace $Y$ of the real space $C[0,1]$ such that
\[
\lambda(Y,C[0,1])=\Lambda,
\]
and the infimum defining $\lambda(Y,C[0,1])$ is not attained. For each even codimension $n$ we moreover realise every value in the interval $(2,1+\beta_n]$, where
\[
\beta_n
=
\Exp_{\Prob_n}\Bigl|\sum_{j=1}^n \varepsilon_j\Bigr|
=
n2^{-n}\binom{n}{n/2}
\sim
\sqrt{\frac{2n}{\pi}},
\]
$(\varepsilon_j)$ is a Rademacher family on $\Omega_n=\{-1,1\}^n$, and $\Prob_n$ is the uniform probability measure.
\end{abstract}

\maketitle

\section{Introduction}

For a closed subspace $E$ of a Banach space $F$, the \emph{projection constant} of $E$ in $F$ is
\[
\lambda(E,F):=\inf\{\|P\|: P\colon F\to E,\ P|_E=\Id_E\}.
\]
When the infimum is attained, one obtains a \emph{minimal projection}; when it is not, the pair $(E,F)$ exhibits a substantially subtler approximation-theoretic geometry. Projection constants and minimal projections form a classical subject, with roots in the work of Bohnenblust, Sobczyk, Kelley, Grünbaum, Kadec--Snobar, and many others. They lie at the intersection of approximation theory, local Banach space geometry, and the structure of function spaces. 

Minimal projections and projection constants have been studied extensively in approximation theory and Banach space geometry. Classical contributions include the work of Chalmers and Metcalf \cite{CM1,CM2}, Cheney and collaborators \cite{CF,CH,CM}, and Fisher--Morris--Wulbert \cite{FMW}. Later developments on uniqueness, asymptotic behaviour, and extremal projection constants may be found in \cite{HK1,HK3,HK4,KLL,LE2,LS,SK3,LM,FS2,BB22,BB23}. The regular symmetric finite-dimensional models that enter our construction come from \cite{CL1,CL2,CL3,CL4}, while the uniqueness input we need is closely related to the Chalmers--Metcalf framework and its later refinement in \cite{LP}.

The present paper is concerned with finite-codimensional subspaces of Banach spaces with the Daugavet property. This class contains, for instance, $C(K)$ whenever $K$ has no isolated points, as well as $L_1(\mu)$ over non-atomic measure spaces; see \cite{KSSW}. In such spaces, finite-rank perturbations of the identity satisfy the rigid norm identity
\[
\|\Id_X+T\|=1+\|T\|,
\]
and this turns the projection problem for a finite-codimensional subspace into a dual finite-dimensional problem.

More precisely, if
\[
Y=\bigcap_{j=1}^n \ker f_j\subset X
\qquad\text{and}\qquad
W=\spanop\{f_1,\dots,f_n\}\subset X^*,
\]
then our first main result shows that
\[
\lambda(Y,X)=1+\lambda(W,X^*),
\]
and that minimal projections $X\to Y$ correspond exactly to weak$^*$-continuous minimal projections $X^*\to W$. Thus the geometry of finite-codimensional subspaces in a Daugavet space is encoded by the finite-dimensional annihilator in the dual, together with the additional weak$^*$-continuity constraint.

This perspective already yields several structural consequences. Every proper finite-codimensional subspace of a Daugavet space has projection constant at least $2$, every hyperplane has projection constant exactly $2$, and a hyperplane $\ker f$ admits a minimal projection if and only if the functional $f$ attains its norm. In particular, the hyperplane case is completely classified in terms of norm-attainment.

The second part of the paper specialises to the real space $X=C[0,1]$, or equivalently to its dual $M[0,1]$. Here our aim is not merely to estimate projection constants, but to produce systematic non-attainment phenomena. The starting point is a transfer principle from finite-dimensional subspaces of $\ell_1^N$ to piecewise-constant subspaces of $L_1[0,1]\subset M[0,1]$. If $V\subset \ell_1^N$ is \emph{duplication-stable}, in the sense that all its block duplicates have the same projection constant and unique minimal projections, then its piecewise-constant copy $\mathscr V\subset M[0,1]$ has the same projection constant as $V$ but admits no weak$^*$-continuous minimal projection as soon as $\dim V\geqslant 2$.

The core mechanism is simple and robust. The unique minimal projection in $L_1[0,1]$ is forced, by uniqueness on all uniform refinements of the underlying partition, to coincide with any putative weak$^*$-continuous minimal projection on $M[0,1]$. Weak$^*$ continuity then expresses the latter by continuous coding functions. A localisation argument inside each interval of the partition shows that these coding functions must be constant, which is incompatible with dimension at least two; in the complex case one first rotates by a unimodular scalar and compares real parts. Through the Daugavet duality, this obstruction transfers immediately to annihilators in the real space $C[0,1]$.

To make this effective we import explicit duplication-stable spaces from the theory of regular symmetric subspaces of $\ell_1$ \cite{CL1,CL2,CL3,CL4}, together with the later work of the second-named and Prophet \cite{LP}. A complementary recent construction of the authors \cite{KL26}, in the different setting of $(\lambda^+)$-injective spaces, shows that zero-sum subspaces can amplify relative projection constants while preserving non-attainment. This yields finite-codimensional subspaces of the real space $C[0,1]$ with prescribed projection constants and no minimal projections. In codimension $2$ we obtain every value in a non-trivial interval, and for every even $n$ we obtain every value in the interval
\[
(2,1+\beta_n],
\qquad
\beta_n
=
\Exp_{\Prob_n}\Bigl|\sum_{j=1}^n \varepsilon_j\Bigr|
=
n2^{-n}\binom{n}{n/2},
\]
where $(\varepsilon_j)_{j=1}^n$ is a Rademacher family on the uniform probability space $\Omega_n=\{-1,1\}^n$. Equivalently, $\beta_n$ is the mean absolute displacement after $n$ steps of the simple symmetric random walk, so
\[
\beta_n\sim \sqrt{\frac{2n}{\pi}}.
\]
Since every codimension-$n$ subspace of a Daugavet space satisfies $\lambda\leqslant 1+\sqrt n$ by the Kadec--Snobar theorem, our explicit family is asymptotically optimal up to the universal factor $\sqrt{2/\pi}$.

Our main results are the following.

\begin{theorem}
\label{thm:intro-duality}
Let $X$ be a Banach space with the Daugavet property, and let
\[
Y=\bigcap_{j=1}^n \ker f_j\subset X,
\qquad
W=\spanop\{f_1,\dots,f_n\}\subset X^*.
\]
Then
\[
\lambda(Y,X)=1+\lambda(W,X^*).
\]
Moreover, a minimal projection from $X$ onto $Y$ exists if and only if there exists a weak$^*$-continuous minimal projection from $X^*$ onto $W$.
\end{theorem}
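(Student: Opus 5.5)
Every projection $P\colon X\to Y$ has the form $P=\Id_X-Q$, where $Q$ is a finite-rank projection with $\ker Q=Y$. We may assume $f_1,\dots,f_n$ are linearly independent. Choose $x_1,\dots,x_n\in X$ with $f_i(x_j)=\delta_{ij}$. Then $Q=\sum_j x_j'\otimes f_j$ for some $x_j'$, and the condition $QP=0$, i.e.\ $Q^2=Q$, forces $f_i(x_j')=\delta_{ij}$. Conversely, any such family $(x_j')$ produces a projection onto $Y$. Since $Q$ has finite rank, the Daugavet property gives
\[
\|P\|=\|\Id_X-Q\|=1+\|Q\|.
\]
(Apply the Daugavet identity to $T=-Q$; the Daugavet property is equivalently stated for all finite-rank, indeed all rank-one, operators, and the finite-rank version is standard.) Hence $\lambda(Y,X)=1+\inf\|Q\|$, the infimum being taken over all finite-rank projections $Q$ on $X$ with kernel $Y$. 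This infimum is attained exactly when a minimal projection onto $Y$ exists.

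Next we dualise. For such a $Q$, the adjoint is $Q^*=\sum_j f_j\otimes \hat x_j'$, i.e.\ $Q^*\varphi=\sum_j\varphi(x_j')f_j$. This is a weak$^*$-continuous projection from $X^*$ onto $W$, since $Q^*f_i=\sum_j f_i(x_j')f_j=f_i$, and $\|Q^*\|=\|Q\|$. Conversely, let $R\colon X^*\to W$ be a weak$^*$-continuous projection. Write $R\varphi=\sum_j \psi_j(\varphi)f_j$. Each coordinate functional $\psi_j$ is weak$^*$-continuous, since it is $R$ followed by a coordinate map on the finite-dimensional space $W$. Hence $\psi_j=\hat x_j'$ for some $x_j'\in X$, and $R=Q^*$ with $Q=\sum_j x_j'\otimes f_j$ a projection with kernel $Y$. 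This gives
\[
\lambda(Y,X)=1+\inf\{\|R\|: R\colon X^*\to W \text{ a weak}^*\text{-continuous projection}\},
\]
and minimal projections onto $Y$ correspond bijectively to minimisers of the right-hand infimum.

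The remaining, and main, step is to show that restricting to weak$^*$-continuous projections does not change the infimum. That is, for every projection $R\colon X^*\to W$ and every $\varepsilon>0$ we need a weak$^*$-continuous projection $R'$ with $\|R'\|\le\|R\|+\varepsilon$. Write $R=\sum_j \Phi_j\otimes f_j$ with $\Phi_j\in X^{**}$ and $\Phi_j(f_i)=\delta_{ij}$. This is exactly an operator $X^*\to W$ with norm $\|R\|$, given by the $n$-tuple $(\Phi_j)$.

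We proceed in two stages.
\begin{enumerate}
\item Apply the principle of local reflexivity to the finite-dimensional space $E=\spanop\{\Phi_j\}\subset X^{**}$ and $F=W$. This yields an operator $T\colon E\to X$ with $\|T\|\le 1+\varepsilon$ and $f(T\Phi)=\Phi(f)$ for all $f\in W$ and $\Phi\in E$.
\item Set $x_j'=T\Phi_j$. Then $f_i(x_j')=\delta_{ij}$, so $R'=\sum_j \hat x_j'\otimes f_j$ is a weak$^*$-continuous projection onto $W$.
\end{enumerate}

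To bound the norm, identify $R$ with the map $X^*\to W$ and estimate
\[
\|R'\varphi\|=\Bigl\|\sum_j\varphi(T\Phi_j)f_j\Bigr\|=\sup_{\|g\|\le1,\ g\in W^*}\bigl|\varphi\bigl(T\textstyle\sum_j g(f_j)\Phi_j\bigr)\bigr|.
\]
Since $T^*\varphi$ is a functional on $E$ of norm at most $(1+\varepsilon)\|\varphi\|$, extend it by Hahn--Banach to some $\tilde\varphi\in X^{***}$. Hence the quantity above is at most
\[
(1+\varepsilon)\|\varphi\|\sup_g\Bigl\|\sum_j g(f_j)\Phi_j\Bigr\|_{X^{**}}=(1+\varepsilon)\|\varphi\|\,\|R\|,
\]
because $\sup_g\|\sum_j g(f_j)\Phi_j\|=\|R^*\|=\|R\|$, where $R^*\colon W^*\to X^{**}$ is given by $g\mapsto\sum_j g(f_j)\Phi_j$.

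So $\|R'\|\le(1+\varepsilon)\|R\|$, and the two infima coincide. Combined with the previous paragraph this proves both the formula $\lambda(Y,X)=1+\lambda(W,X^*)$ and the stated correspondence between minimal projections. The delicate point is this local reflexivity step, which must preserve both the biorthogonality to $W$ and the norm.
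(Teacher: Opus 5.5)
Your proof is correct and follows the paper's route. You use the Daugavet identity $\|\Id_X-Q\|=1+\|Q\|$ for $Q=\Id_X-P$, the adjoint correspondence with weak$^*$-continuous projections onto $W$, and local reflexivity to show that restricting to weak$^*$-continuous projections does not change the infimum; your $R'$ is exactly the paper's $(T\circ R^*)^*$ with $E=R^*(W^*)=\spanop\{\Phi_j\}$, and the Hahn--Banach extension in your norm estimate is unnecessary, since $|\varphi(T\Psi)|\leqslant\|\varphi\|\,\|T\|\,\|\Psi\|$ directly.
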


\begin{theorem}
\label{thm:intro-main}
For every $\Lambda\in[2,\infty)$ there exists a finite-codimensional subspace $Y$ of the real space $C[0,1]$ such that
\[
\lambda(Y,C[0,1])=\Lambda,
\]
and the infimum defining $\lambda(Y,C[0,1])$ is not attained.
\end{theorem}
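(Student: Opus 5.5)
Theorem~\ref{thm:intro-main} will follow from Theorem~\ref{thm:intro-duality} combined with the transfer principle outlined in the introduction. Since $C[0,1]$ has the Daugavet property, for $Y={}^\perp\mathscr V$ with $\mathscr V=\spanop\{f_1,\dots,f_n\}\subset M[0,1]$ we get $\lambda(Y,C[0,1])=1+\lambda(\mathscr V,M[0,1])$. Moreover, $Y$ has no minimal projection precisely when $\mathscr V$ has no weak$^*$-continuous minimal projection. It therefore suffices to produce, for each $\mu=\Lambda-1\in[1,\infty)$, a finite-dimensional $\mathscr V\subset M[0,1]$ with $\lambda(\mathscr V,M[0,1])=\mu$ and no weak$^*$-continuous minimal projection. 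Note that $\mathscr V$ must have dimension at least $2$: a one-dimensional $\mathscr V$ always admits a norm-one projection, and in that case non-attainment of the hyperplane case is governed instead by norm-attainment of $f$.

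\emph{The endpoint $\Lambda=2$.} Here one can take a hyperplane $Y=\ker f$ where $f\in M[0,1]$ does not attain its norm on $C[0,1]$, e.g.\ $f=\mathbf 1_{[0,1/2]}\,dt-\mathbf 1_{(1/2,1]}\,dt$. By the hyperplane corollary, $\lambda(Y,C[0,1])=2$ and there is no minimal projection.

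\emph{The range $\Lambda>2$.} The plan is as follows.
\begin{enumerate}[label=(\roman*)]
\item Take a duplication-stable regular symmetric subspace $V\subset\ell_1^N$ from \cite{CL1,CL2,CL3,CL4,LP}, of dimension at least $2$, with projection constant $\mu$. Regular symmetric subspaces are typically orthogonal complements of a symmetric system, such as the zero-sum type spaces.
\item Transfer $V$ to its piecewise-constant copy $\mathscr V\subset L_1[0,1]\subset M[0,1]$. By the transfer principle, $\lambda(\mathscr V,M[0,1])=\lambda(V,\ell_1^N)$ and no weak$^*$-continuous minimal projection exists.
\item Show that the attainable values $\lambda(V,\ell_1^N)-1$ fill $(1,\infty)$.
\end{enumerate}
For step (iii), the even-codimension families give every value in $(1,\beta_n]$, and since $\beta_n\to\infty$, every $\mu>1$ lies in some interval $(1,\beta_n]$. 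The fill-in within each interval should come from a continuous one-parameter family of regular symmetric spaces, for instance the weighted zero-sum spaces $\{x:\sum w_ix_i=0\}$ in the Chalmers--Lewicki setting. Along such a family the projection constant, computed by the explicit minimal-projection formula, varies continuously from $1$ up to $\beta_n$, and the intermediate value theorem then gives every value.

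\emph{Main obstacle.} The main difficulty is step (iii) together with verifying duplication-stability along the whole family. Uniqueness of the minimal projection must persist for every block duplicate, and the projection constant must be unchanged under duplication. I would first check this via the Chalmers--Metcalf characterisation refined in \cite{LP}. The key point is that duplicating coordinates preserves the symmetry group action and the averaged (symmetrised) minimal projection, so the Chalmers--Metcalf operator lifts verbatim and uniqueness carries over.

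Continuity of $\lambda$ along the parameter is routine: for a fixed dimension, $\lambda$ is continuous in the Banach--Mazur sense, i.e.\ with respect to the subspace. Combining the resulting $\mathscr V$ with Theorem~\ref{thm:intro-duality} finishes the proof.
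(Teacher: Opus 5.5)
Your architecture is the paper's. You combine the Daugavet duality (Corollary~\ref{cor:C01-annihilator}), the piecewise-constant transfer of a duplication-stable $V$ (Theorem~\ref{thm:piecewise-main}), and the even-codimension families with $\beta_n\to\infty$ for $\Lambda>2$, exactly as in Corollary~\ref{cor:all-lambda>=2}. For $\Lambda=2$ you use a non-norm-attaining functional. Your endpoint measure $\mathbf 1_{[0,1/2]}\,dt-\mathbf 1_{(1/2,1]}\,dt$ is a valid substitute for the atomic example in Example~\ref{ex:hyperplanes}(a). Equality in $|\int h\,df|\leqslant 1$ would force $h$ to equal a constant $\pm1$ a.e.\ on one half and its negative on the other, which continuity forbids.

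The one place where you go beyond quoting the even-codimension result points to the wrong object, and as written step (iii) would fail. Weighted zero-sum spaces $\{x:\sum_i w_ix_i=0\}$ are hyperplanes $H=\ker f$ of $\ell_1^N$. Every such $H$ satisfies $\lambda(H,\ell_1^N)\leqslant 2$: take $P=\Id-f(\cdot)\,x/f(x)$ with $x$ a unit vector at which $f$ attains its norm. For instance, equal weights give $2-2/N$. Since $\beta_8=35/16>2$ and $\beta_n\to\infty$, such a family cannot fill $(1,\beta_n]$. It would also produce $Y$ of codimension $N-1$ rather than $n$.

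Regular symmetric spaces in the relevant sense are not zero-sum complements. The paper uses the $n$-dimensional family $V_{n,a}\subset\ell_1^{2n!2^n}$ of Proposition~\ref{prop:regular-input}(ii), for which $\lambda(V_{n,a},\ell_1^{2n!2^n})=\rho_n(a)$ is given by an explicit formula. This formula is continuous on $[0,\infty)$, with $\rho_n(0)=C_n/2^{n-1}=\beta_n$ (Lemma~\ref{lem:Cn-rademacher}) and $\rho_n(a)\to 1$. The intermediate value theorem is applied directly to this formula, so no abstract continuity of $\lambda$ is needed. Note also that $\lambda(V,\ell_1^N)$ is not a Banach--Mazur invariant of $V$, so that phrasing is off.

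Duplication stability along the family is imported rather than re-derived. The constant is preserved by the block-averaging lemma, and uniqueness comes from the Lewicki--Prophet criterion applied to each duplicate. With $V_{n,a}$ in place of your zero-sum suggestion, step (iii) becomes exactly Theorem~\ref{thm:even-codim}, and the rest of your argument goes through.

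There is also a slip in (iii): it is $\lambda(V,\ell_1^N)$ itself, not $\lambda(V,\ell_1^N)-1$, that must fill $(1,\infty)$.
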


We also obtain explicit codimension-$2$ and codimension-$n$ families; see Theorems~\ref{thm:codim2} and \ref{thm:even-codim}. At the endpoint $\Lambda=1+\sqrt n$, Theorem~\ref{thm:intro-duality} would require an annihilator $W\subset M[0,1]$ that is extremal for the Kadec--Snobar bound. Thus the remaining gap between $1+\beta_n$ and $1+\sqrt n$ sits precisely at the interface between Daugavet geometry, extremal projection-constant theory, and weak$^*$ non-attainment.

The paper is organised as follows. Sections~\ref{sec:prelim}--\ref{sec:transfer} are written over both scalar fields. Section~\ref{sec:prelim} contains the weak$^*$ approximation lemma for finite-dimensional ranges. Section~\ref{sec:daugavet} proves the Daugavet duality formula and derives the hyperplane criterion and general codimension bounds. Section~\ref{sec:transfer} develops the passage from $\ell_1^N$ to piecewise-constant subspaces of $L_1[0,1]\subset M[0,1]$ and shows the failure of weak$^*$ continuity for minimal projections. Section~\ref{sec:examples} then specialises to the real field, isolates the precise external input from \cite{CL1, LP}, and derives explicit non-attaining examples in the real space $C[0,1]$. The final section collects further remarks and questions.

\section{Preliminaries}
\label{sec:prelim}

In Sections~\ref{sec:prelim}--\ref{sec:transfer} we allow both scalar fields, $\K\in\{\R,\C\}$. Section~\ref{sec:examples} will be specialised to the real field. For a Banach space $X$, the duality pairing between $X$ and $X^*$ will be written as $\langle x^*,x\rangle=x^*(x)$.

If $W\subset X^*$ is finite-dimensional, we define
\[
\lambda_{\wstar}(W,X^*)
:=
\inf\{\|Q\|: Q\colon X^*\to W\text{ is a weak$^*$-continuous projection}\}.
\]
Clearly
\[
\lambda(W,X^*)\leqslant \lambda_{\wstar}(W,X^*).
\]
The converse inequality is the only place where we use the principle of local reflexivity.

\begin{proposition}
\label{prop:wstar-equals-lambda}
Let $X$ be a Banach space and let $W\subset X^*$ be finite-dimensional. Then
\[
\lambda_{\wstar}(W,X^*)=\lambda(W,X^*).
\]
\end{proposition}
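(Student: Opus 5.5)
Only the inequality $\lambda_{\wstar}(W,X^*)\leqslant\lambda(W,X^*)$ needs proof. Fix a projection $Q\colon X^*\to W$ and $\varepsilon>0$; we produce a weak$^*$-continuous projection $\widetilde Q\colon X^*\to W$ with $\|\widetilde Q\|\leqslant(1+\varepsilon)\|Q\|$.

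\emph{Representation of $Q$.} Choose a basis $w_1,\dots,w_n$ of $W$ and write
\[
Qx^*=\sum_{j=1}^n \langle \phi_j,x^*\rangle\, w_j,\qquad \phi_j\in X^{**}.
\]
The projection property reads $\langle\phi_j,w_k\rangle=\delta_{jk}$. The map $Q$ is weak$^*$-continuous exactly when every $\phi_j$ lies in $X$. Our task is therefore to replace the $\phi_j$ by elements $x_j\in X$. These must preserve the biorthogonality relations on $W$ and cost only a factor $1+\varepsilon$ in operator norm.

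\emph{Local reflexivity.} Take $E=\spanop\{\phi_1,\dots,\phi_n\}\subset X^{**}$ and $F=W\subset X^*$, both finite-dimensional. The principle of local reflexivity gives an operator $T\colon E\to X$ with the following properties.
\begin{itemize}
\item $\|T\|\,\|T^{-1}|_{T(E)}\|\leqslant 1+\varepsilon$; in particular $\|T\|\leqslant 1+\varepsilon$.
\item $\langle w,T\phi\rangle=\langle\phi,w\rangle$ for all $\phi\in E$ and $w\in W$.
\end{itemize}
Set $x_j=T\phi_j$ and define
\[
\widetilde Qx^*=\sum_j\langle x^*,x_j\rangle w_j .
\]
This map is weak$^*$-continuous, has range in $W$, and satisfies $\widetilde Q w_k=\sum_j\langle\phi_j,w_k\rangle w_j=w_k$. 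Hence $\widetilde Q$ is a projection onto $W$.

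\emph{Norm estimate.} We use the adjoint. For $x^{*}\in X^*$ and $w^{*}\in W^*$,
\[
\langle w^*,\widetilde Qx^*\rangle=\Bigl\langle x^*,\,T\Bigl(\sum_j\langle w^*,w_j\rangle\phi_j\Bigr)\Bigr\rangle ,
\]
while $\langle w^*,Qx^*\rangle=\langle\sum_j\langle w^*,w_j\rangle\phi_j,x^*\rangle$. Put $\psi=\sum_j\langle w^*,w_j\rangle\phi_j=Q^*w^*\in E$. Then
\[
\|\widetilde Q\|=\sup_{\|w^*\|\leqslant1}\|T\psi\|\leqslant(1+\varepsilon)\sup_{\|w^*\|\leqslant1}\|Q^*w^*\|=(1+\varepsilon)\|Q\|.
\]
Letting $\varepsilon\to0$ and then taking the infimum over $Q$ gives the claim.

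The only delicate point is stating the principle of local reflexivity in the form that includes the exact duality condition on the prescribed finite-dimensional subspace $F=W$. This version is standard (Lindenstrauss--Rosenthal, Johnson--Rosenthal--Zippin), but it is what makes $\widetilde Q$ exactly idempotent, so no correction step is needed.
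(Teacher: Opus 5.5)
Your proof is correct and is essentially the paper's argument. The paper applies local reflexivity to $E=Q^*(W^*)$, which is exactly your $\spanop\{\phi_1,\dots,\phi_n\}$, together with $W$, and sets $\widetilde Q=(T\circ Q^*)^*$, which is your coordinate formula for $\widetilde Q$, with the same bound $\|\widetilde Q\|\leqslant\|T\|\,\|Q^*\|\leqslant(1+\varepsilon)\|Q\|$.
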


\begin{proof}
Fix $\varepsilon>0$ and choose a projection $Q\colon X^*\to W$ such that
\[
\|Q\|<\lambda(W,X^*)+\varepsilon.
\]
Consider the finite-dimensional subspace
\[
E:=Q^*(W^*)\subset X^{**}.
\]
By the principle of local reflexivity, applied to the pair $(E,W)$ (see, for instance, \cite[Theorem~11.2.4]{AK}), there exists a linear operator
\[
T\colon E\to X
\]
such that
\[
\|T\|\leqslant 1+\varepsilon
\qquad\text{and}\qquad
\langle f,Te\rangle = \langle e,f\rangle
\quad (e\in E,\ f\in W).
\]
Define
\[
\widetilde Q := (T\circ Q^*)^*\colon X^*\to W^{**}=W.
\]
Since $T\circ Q^*$ takes values in $X$, the operator $\widetilde Q$ is weak$^*$-continuous. We claim that it is a projection onto $W$. Indeed, let $f\in W$ and $\psi\in W^*$. Then
\[
\langle \psi,\widetilde Q f\rangle
= \langle TQ^*\psi,f\rangle
= \langle Q^*\psi,f\rangle
= \langle \psi,Qf\rangle
= \langle \psi,f\rangle,
\]
so $\widetilde Qf=f$ for all $f\in W$.

Finally,
\[
\|\widetilde Q\| = \|TQ^*\| \leqslant \|T\|\,\|Q^*\| \leqslant (1+\varepsilon)\|Q\|
< (1+\varepsilon)(\lambda(W,X^*)+\varepsilon).
\]
Since $\varepsilon>0$ is arbitrary, this shows that
\[
\lambda_{\wstar}(W,X^*)\leqslant \lambda(W,X^*).
\]
The reverse inequality is immediate.
\end{proof}

We shall also use the following standard description of finite-rank weak$^*$-continuous operators on $M(K)=C(K)^*$.

\begin{lemma}
\label{lem:wstar-coded}
Let $K$ be a compact Hausdorff space, let $M(K)=C(K)^*$, and let $E\subset M(K)$ be finite-dimensional. A finite-rank operator $P\colon M(K)\to E$ is weak$^*$-continuous if and only if there exist a basis $\mu_1,\dots,\mu_n$ of $E$ and functions $g_1,\dots,g_n\in C(K)$ such that
\[
P\nu = \sum_{j=1}^n \Bigl(\int_K g_j\, d\nu\Bigr)\mu_j
\qquad (\nu\in M(K)).
\]
\end{lemma}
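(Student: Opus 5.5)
The plan is to prove both directions of Lemma~\ref{lem:wstar-coded} by working with coordinate functionals on the finite-dimensional space $E$. Write $n=\dim E$, fix any basis $\mu_1,\dots,\mu_n$ of $E$, and let $\mu_1^*,\dots,\mu_n^*\in E^*$ be the dual basis. Every operator $P\colon M(K)\to E$ can then be written as
\[
P\nu=\sum_{j=1}^n \phi_j(\nu)\,\mu_j,\qquad \phi_j:=\mu_j^*\circ P\in M(K)^*.
\]
This turns the statement into a statement about the scalar functionals $\phi_j$.

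For the ``if'' direction, assume $P$ has the displayed form. Each map $\nu\mapsto\int_K g_j\,d\nu=\langle \nu,g_j\rangle$ is, by the definition of the weak$^*$ topology $\sigma(M(K),C(K))$, a weak$^*$-continuous linear functional. The map $P$ is a finite sum of such functionals multiplied by fixed vectors $\mu_j$. Since $E$ is finite-dimensional, it carries a unique Hausdorff vector topology, so each summand is weak$^*$-to-norm continuous and hence so is $P$. Here I read ``weak$^*$-continuous'' as continuity from $(M(K),w^*)$ into $E$ with its (unique) linear topology. If $E$ is instead given the relative weak$^*$ topology, the two readings agree, again because $E$ is finite-dimensional.

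For the ``only if'' direction, assume $P$ is weak$^*$-continuous. Then each $\phi_j=\mu_j^*\circ P$ is a weak$^*$-continuous linear functional on $M(K)=C(K)^*$, being the composition of $P$ with a continuous functional on the finite-dimensional space $E$. The key step is the standard fact that the weak$^*$-continuous linear functionals on a dual space $Z^*$ are exactly the evaluations at points of $Z$. I would cite this, or prove it quickly as follows. Continuity at $0$ gives finitely many $x_1,\dots,x_m\in C(K)$ with $|\phi_j(\nu)|\leqslant 1$ whenever $\max_i|\langle\nu,x_i\rangle|\leqslant 1$. Hence $\bigcap_i\ker x_i\subset\ker\phi_j$, and the usual linear-algebra lemma yields $\phi_j\in\spanop\{x_1,\dots,x_m\}$. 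So there is $g_j\in C(K)$ with $\phi_j(\nu)=\int_K g_j\,d\nu$, which gives the required representation with the chosen basis.

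The only point I expect to need care, and it is minor, is this identification of $(M(K),w^*)^*$ with $C(K)$. The rest is bookkeeping. I would also remark that the basis can be taken arbitrary, since changing the basis replaces the $g_j$ by linear combinations. Finally, if $P$ happens to have rank smaller than $n$, nothing changes, because some $g_j$ may simply be linearly dependent or zero.
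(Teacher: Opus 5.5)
Your proposal is correct and follows essentially the same route as the paper. You expand $P$ in coordinate functionals with respect to a basis of $E$ and identify each weak$^*$-continuous functional $\phi_j\circ P$ with an element of $C(K)$; you also supply the short proof of that standard identification, which the paper simply invokes.
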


\begin{proof}
If $P$ has the displayed form, then it is obviously weak$^*$-continuous. Conversely, choose a basis $\mu_1,\dots,\mu_n$ of $E$ and let $\phi_1,\dots,\phi_n\in E^*$ be the coordinate functionals. If $P$ is weak$^*$-continuous, then each $\phi_j\circ P$ is a weak$^*$-continuous linear functional on $M(K)$, hence belongs to the canonical copy of $C(K)$ inside $M(K)^*$. Thus there exists $g_j\in C(K)$ such that
\[
(\phi_j\circ P)(\nu)=\int_K g_j\,d\nu
\qquad (\nu\in M(K)).
\]
Therefore
\[
P\nu = \sum_{j=1}^n (\phi_j\circ P)(\nu)\,\mu_j
= \sum_{j=1}^n \Bigl(\int_K g_j\, d\nu\Bigr)\mu_j.
\qedhere
\]
\end{proof}

\section{Finite-codimensional subspaces of Daugavet spaces}
\label{sec:daugavet}

Recall that a Banach space $X$ has the \emph{Daugavet property} if every rank-one operator $T\colon X\to X$ satisfies
\[
\|\Id_X+T\| = 1+\|T\|.
\]
A theorem of Kadets, Shvidkoy, Sirotkin, and Werner implies that then the same identity holds for every finite-rank operator; see \cite{KSSW}.

\begin{theorem}
\label{thm:daugavet-duality}
Let $X$ be a Banach space with the Daugavet property, let $f_1,\dots,f_n\in X^*$ be linearly independent, and put
\[
Y:=\bigcap_{j=1}^n \ker f_j,
\qquad
W:=\spanop\{f_1,\dots,f_n\}.
\]
Then
\[
\lambda(Y,X)=1+\lambda(W,X^*).
\]
Moreover, the following are equivalent.
\begin{enumerate}[label=\textup{(\roman*)}]
\item There exists a minimal projection $P\colon X\to Y$.
\item There exists a weak$^*$-continuous minimal projection $Q\colon X^*\to W$.
\end{enumerate}
If $P$ and $Q$ correspond, then
\[
Q=(\Id_X-P)^*.
\]
\end{theorem}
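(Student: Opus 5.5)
The plan is to set up a norm-preserving bijection between projections $P\colon X\to Y$ and weak$^*$-continuous projections $Q\colon X^*\to W$ via $Q=(\Id_X-P)^*$, and then combine it with the Daugavet identity and Proposition~\ref{prop:wstar-equals-lambda}.

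\emph{Step 1 (correspondence).} Let $P\colon X\to Y$ be a bounded projection and put $R:=\Id_X-P$. Then $R$ is a projection with $\ker R=Y$. Since $Y$ has codimension $n$, the range of $R$ is $n$-dimensional, so $R$ has finite rank. Its adjoint $Q:=R^*$ is weak$^*$-continuous and has range $(\ker R)^\perp=Y^\perp=W$; the last equality holds because $W$ is finite-dimensional and hence weak$^*$-closed. Moreover $Q^2=Q$, so $Q$ is a weak$^*$-continuous projection onto $W$.

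Conversely, let $Q\colon X^*\to W$ be a weak$^*$-continuous projection. A weak$^*$-continuous operator on $X^*$ is the adjoint of a bounded operator on $X$, so $Q=R^*$ for a unique $R\colon X\to X$. Then $R^2=R$ because $(R^2)^*=Q^2=Q$, and
\[
\ker R={}^\perp(\operatorname{ran}R^*)={}^\perp W=Y.
\]
Hence $P:=\Id_X-R$ is a projection onto $Y$. The two constructions are mutually inverse, and $\|Q\|=\|R\|$.

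\emph{Step 2 (norms).} Since $R$ has finite rank, the Daugavet identity for finite-rank operators \cite{KSSW}, applied to $T=-R$, gives
\[
\|P\|=\|\Id_X-R\|=1+\|R\|=1+\|Q\|.
\]
Taking infima over the bijection of Step 1 yields
\[
\lambda(Y,X)=1+\lambda_{\wstar}(W,X^*)=1+\lambda(W,X^*),
\]
where the last equality is Proposition~\ref{prop:wstar-equals-lambda}.

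\emph{Step 3 (equivalence of (i) and (ii)).} Suppose $P$ is a minimal projection. Then the corresponding $Q$ satisfies $\|Q\|=\|P\|-1=\lambda(Y,X)-1=\lambda(W,X^*)$, so $Q$ is minimal among all projections onto $W$, not merely the weak$^*$-continuous ones. Conversely, if $Q$ is a weak$^*$-continuous minimal projection, the associated $P$ has norm $1+\lambda(W,X^*)=\lambda(Y,X)$, so $P$ is minimal. The formula $Q=(\Id_X-P)^*$ holds by construction.

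\emph{Main obstacle.} The delicate point is the converse direction of Step 1. One must justify that a weak$^*$-continuous operator is an adjoint, which is standard for $X^*$ with its weak$^*$ topology, and identify $\ker R$ correctly using the fact that $W$ is weak$^*$-closed. Equality of norms and the passage from $\lambda_{\wstar}$ to $\lambda$ are then supplied by the Daugavet identity and Proposition~\ref{prop:wstar-equals-lambda}.
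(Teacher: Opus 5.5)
Your proposal is correct and follows essentially the same route as the paper. Both arguments use the correspondence $P\leftrightarrow Q=(\Id_X-P)^*$, apply the Daugavet identity to the finite-rank operator $\Id_X-P$ to get $\|P\|=1+\|Q\|$, and then invoke Proposition~\ref{prop:wstar-equals-lambda} to replace $\lambda_{\wstar}$ by $\lambda$. The paper verifies the correspondence in coordinates (choosing $x_j$ with $\langle f_i,x_j\rangle=\delta_{ij}$ and writing $Px=x-\sum_j\langle f_j,x\rangle\varphi_j$) and proves two inequalities. You instead use the standard adjoint and annihilator identities to set up an explicit bijection, which gives the equality in one stroke; this is only a cosmetic difference.
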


\begin{proof}
Let $P\colon X\to Y$ be a projection. Since $f_1,\dots,f_n$ are linearly independent and $Y=\bigcap_{j=1}^n\ker f_j$, the quotient map
\[
x+Y\longmapsto \bigl(\langle f_1,x\rangle,\dots,\langle f_n,x\rangle\bigr)
\]
is an isomorphism from $X/Y$ onto $\K^n$. Choose $x_1,\dots,x_n\in X$ such that
\[
\langle f_i,x_j\rangle=\delta_{ij}
\qquad (1\leqslant i,j\leqslant n),
\]
and set
\[
\varphi_j:=(\Id_X-P)x_j \in \ker P
\qquad (1\leqslant j\leqslant n).
\]
Then
\[
\langle f_i,\varphi_j\rangle
=
\langle f_i,x_j\rangle-\langle f_i,Px_j\rangle
=
\delta_{ij},
\]
because $Px_j\in Y$. For $x\in X$, define
\[
z:=(\Id_X-P)x-\sum_{j=1}^n \langle f_j,x\rangle\varphi_j.
\]
Since both terms lie in $\ker P$, we have $z\in\ker P$. On the other hand, for each $i$,
\[
\langle f_i,z\rangle
=
\langle f_i,x-Px\rangle
-
\sum_{j=1}^n \langle f_j,x\rangle\langle f_i,\varphi_j\rangle
=
\langle f_i,x\rangle-\langle f_i,x\rangle
=
0,
\]
so $z\in Y$. Hence $z\in Y\cap\ker P=\{0\}$, and therefore
\[
Px=x-\sum_{j=1}^n \langle f_j,x\rangle\varphi_j
\qquad (x\in X).
\]
Consequently,
\[
(\Id_X-P)^*g = \sum_{j=1}^n \langle g,\varphi_j\rangle f_j
\qquad (g\in X^*),
\]
so $(\Id_X-P)^*$ is a weak$^*$-continuous projection from $X^*$ onto $W$. Since $\Id_X-P$ has finite rank and $X$ has the Daugavet property,
\[
\|P\| = \|\Id_X-(\Id_X-P)\| = 1+\|\Id_X-P\| = 1+\|(\Id_X-P)^*\|.
\]
Taking infima over all projections $P\colon X\to Y$ gives
\[
\lambda(Y,X)\geqslant 1+\lambda_{\wstar}(W,X^*).
\]
By Proposition~\ref{prop:wstar-equals-lambda},
\[
\lambda(Y,X)\geqslant 1+\lambda(W,X^*).
\]

Conversely, let $Q\colon X^*\to W$ be a weak$^*$-continuous projection. Choose the basis $f_1,\dots,f_n$ of $W$. By weak$^*$ continuity there exist $\varphi_1,\dots,\varphi_n\in X$ such that
\[
Qg = \sum_{j=1}^n \langle g,\varphi_j\rangle f_j
\qquad (g\in X^*).
\]
Because $Qf_i=f_i$, we obtain
\[
\langle f_i,\varphi_j\rangle = \delta_{ij}
\qquad (1\leqslant i,j\leqslant n).
\]
Define
\[
Px := x - \sum_{j=1}^n \langle f_j,x\rangle\varphi_j
\qquad (x\in X).
\]
Then, for each $i$,
\[
\langle f_i,Px\rangle
=
\langle f_i,x\rangle
-
\sum_{j=1}^n \langle f_j,x\rangle\langle f_i,\varphi_j\rangle
=
0,
\]
so $P(X)\subset Y$. If $y\in Y$, then $\langle f_j,y\rangle=0$ for all $j$, hence $Py=y$. Thus $P$ is a projection from $X$ onto $Y$, and $(\Id_X-P)^*=Q$. Therefore, again by the Daugavet property,
\[
\|P\| = 1+\|Q\|.
\]
Taking infima over all weak$^*$-continuous projections $Q\colon X^*\to W$ yields
\[
\lambda(Y,X)\leqslant 1+\lambda_{\wstar}(W,X^*) = 1+\lambda(W,X^*).
\]
This proves the equality.

Now suppose that $P$ is minimal. Then the associated operator $Q=(\Id_X-P)^*$ is weak$^*$-continuous and
\[
\|Q\| = \|P\|-1 = \lambda(Y,X)-1 = \lambda(W,X^*),
\]
so $Q$ is a weak$^*$-continuous minimal projection onto $W$.

Conversely, if $Q$ is a weak$^*$-continuous minimal projection onto $W$, the associated projection $P$ onto $Y$ satisfies
\[
\|P\| = 1+\|Q\| = 1+\lambda(W,X^*) = \lambda(Y,X),
\]
so $P$ is minimal.
\end{proof}

\begin{corollary}
\label{thm:daugavet-bounds}
Let $X$ be a Banach space with the Daugavet property, and let $Y\subset X$ be a proper closed subspace of finite codimension $n$. Then
\[
2\leqslant \lambda(Y,X)\leqslant 1+\sqrt n.
\]
In particular, every hyperplane in $X$ has projection constant $2$.
\end{corollary}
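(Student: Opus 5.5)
Everything will be routed through Theorem~\ref{thm:daugavet-duality}. Since $Y$ is closed of codimension $n$, I first choose linearly independent $f_1,\dots,f_n\in X^*$ with $Y=\bigcap_j\ker f_j$. To get them, I take a basis of the annihilator $Y^\perp$, which is $n$-dimensional because $(X/Y)^*\cong Y^\perp$; by Hahn--Banach, $Y$ is then exactly the joint kernel. With $W=\spanop\{f_1,\dots,f_n\}$, the theorem gives $\lambda(Y,X)=1+\lambda(W,X^*)$, so it suffices to prove
\[
1\leqslant \lambda(W,X^*)\leqslant \sqrt n .
\]

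For the lower bound, $W$ is a nonzero subspace because $Y$ is proper, so $n\geqslant 1$. Any projection $Q$ onto $W$ satisfies $Qf=f$ for some $f\neq 0$, hence $\|Q\|\geqslant 1$, and therefore $\lambda(W,X^*)\geqslant 1$. (One could equally argue directly: $\|P\|=1+\|\Id_X-P\|\geqslant 2$, since $\Id_X-P\neq 0$ has finite rank.)

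For the upper bound, I invoke the Kadec--Snobar theorem: every $n$-dimensional subspace $E$ of any Banach space $Z$ satisfies $\lambda(E,Z)\leqslant\sqrt n$. Applied to $E=W\subset Z=X^*$ with $\dim W=n$, this gives $\lambda(W,X^*)\leqslant \sqrt n$, hence $\lambda(Y,X)\leqslant 1+\sqrt n$. Proposition~\ref{prop:wstar-equals-lambda} is already built into the duality theorem, so it does not matter that the Kadec--Snobar projection need not be weak$^*$-continuous.

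For a hyperplane, $n=1$ and the two bounds coincide: $2\leqslant\lambda(Y,X)\leqslant 1+\sqrt1=2$. Alternatively, for $n=1$ every norm-one functional on $W$ extends by Hahn--Banach to a norm-one projection, so $\lambda(W,X^*)=1$ directly. I expect no genuine obstacle. The only points needing care are the choice of independent $f_j$ representing $Y$, and checking that Kadec--Snobar is applied to a genuinely $n$-dimensional subspace of a Banach space.
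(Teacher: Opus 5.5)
Your proposal is correct and follows the paper's proof essentially verbatim: identify $W=Y^\perp$ (an $n$-dimensional space with $Y$ as its joint kernel), apply Theorem~\ref{thm:daugavet-duality}, get the lower bound from $W\neq\{0\}$ and the upper bound from Kadec--Snobar applied to $W\subset X^*$, and obtain the hyperplane case because the two bounds coincide at $n=1$. One small point in your parenthetical alternative: the inequality $\|\Id_X-P\|\geqslant 1$ holds because $\Id_X-P$ is a nonzero idempotent, whereas finite rank is what justifies the Daugavet identity $\|P\|=1+\|\Id_X-P\|$.
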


\begin{proof}
Let $W=Y^\perp\subset X^*$. Then $\dim W=n$, so Theorem~\ref{thm:daugavet-duality} gives
\[
\lambda(Y,X)=1+\lambda(W,X^*).
\]
Because $W\neq\{0\}$, one has $\lambda(W,X^*)\geqslant 1$, which yields the lower bound. The upper bound follows from the Kadec--Snobar theorem applied to the $n$-dimensional space $W\subset X^*$; see, for instance, \cite{AK}. If $n=1$, then the upper and lower bounds coincide.
\end{proof}

\begin{corollary}
\label{cor:hyperplane}
Let $X$ be a Banach space with the Daugavet property, and let $f\in X^*\setminus\{0\}$. Then
\[
\lambda(\ker f,X)=2.
\]
Moreover, the following are equivalent.
\begin{enumerate}[label=\textup{(\roman*)}]
\item There exists a minimal projection from $X$ onto $\ker f$.
\item The functional $f$ attains its norm on the unit sphere of $X$.
\end{enumerate}
\end{corollary}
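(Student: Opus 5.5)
The plan is to reduce everything to Theorem~\ref{thm:daugavet-duality} with $n=1$ and $W=\spanop\{f\}$. The equality $\lambda(\ker f,X)=2$ is already contained in Corollary~\ref{thm:daugavet-bounds}: for a one-dimensional $W$ one has $\lambda(W,X^*)=1$ by Hahn--Banach, since we may take $Q g=\langle \Phi,g\rangle f$ with $\Phi\in X^{**}$, $\|\Phi\|=1$, $\Phi(f)=1$. The remaining task is the equivalence (i)$\Leftrightarrow$(ii). By Theorem~\ref{thm:daugavet-duality}, (i) holds if and only if there is a weak$^*$-continuous projection $Q\colon X^*\to W$ with $\|Q\|=\lambda(W,X^*)=1$.

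Next I would describe all weak$^*$-continuous projections onto $W$. As in the proof of Theorem~\ref{thm:daugavet-duality}, such a $Q$ has the form
\[
Qg=\langle g,\varphi\rangle f \qquad (g\in X^*)
\]
for some $\varphi\in X$ with $\langle f,\varphi\rangle=1$. Its norm is
\[
\|Q\|=\sup_{\|g\|\leqslant 1}|\langle g,\varphi\rangle|\,\|f\|=\|\varphi\|\,\|f\|,
\]
by Hahn--Banach (the norm of $\varphi$ is attained by functionals). So a weak$^*$-continuous norm-one projection onto $W$ exists if and only if there is $\varphi\in X$ with $\langle f,\varphi\rangle=1$ and $\|\varphi\|=1/\|f\|$.

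Finally, I would observe that this last condition is exactly norm attainment. If $f(x_0)=\|f\|$ with $\|x_0\|=1$, put $\varphi=x_0/\|f\|$. Conversely, given such a $\varphi$, the vector $x_0=\|f\|\varphi$ lies on the unit sphere and satisfies $f(x_0)=\|f\|$. In the complex case, norm attainment in the sense $|f(x_0)|=\|f\|$ can be rotated by a unimodular scalar to get $f(x_0)=\|f\|$. There is no real obstacle here; the only point needing care is the norm computation $\|Q\|=\|\varphi\|\,\|f\|$ for the rank-one operator, which is immediate.
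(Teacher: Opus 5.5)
Your proposal is correct and follows essentially the same route as the paper: apply Theorem~\ref{thm:daugavet-duality} with $W=\spanop\{f\}$, note that $\lambda(W,X^*)=1$, and observe that the weak$^*$-continuous projections $g\mapsto\langle g,\varphi\rangle f$ have norm $\|\varphi\|\,\|f\|$, so minimality is exactly norm attainment. One small slip: asking for $\Phi(f)=1$ together with $\|\Phi\|=1$ presupposes $\|f\|=1$. This is harmless, because $\ker f$ is unchanged by scaling: either normalise $f$ first, as the paper does, or take $\|\Phi\|=1/\|f\|$.
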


\begin{proof}
Replacing $f$ by $f/\|f\|$, we may assume that $\|f\|=1$. Put $Y=\ker f$ and $W=\spanop\{f\}$. Since $W$ is one-dimensional, there exists $\Phi\in S_{X^{**}}$ such that $\Phi(f)=1$, and then
\[
Qg:=\Phi(g)f
\qquad (g\in X^*)
\]
is a projection from $X^*$ onto $W$ of norm $1$. Hence $\lambda(W,X^*)=1$, and Theorem~\ref{thm:daugavet-duality} yields
\[
\lambda(Y,X)=1+\lambda(W,X^*)=2.
\]

By Theorem~\ref{thm:daugavet-duality}, a minimal projection onto $Y$ exists if and only if there is a weak$^*$-continuous minimal projection $Q\colon X^*\to W$. Any weak$^*$-continuous projection onto $W$ has the form
\[
Qg=g(x)f
\qquad (g\in X^*)
\]
for some $x\in X$ with $f(x)=1$. Since $\|Q\|=\|x\|\,\|f\|=\|x\|$, such a projection is minimal if and only if $\|x\|=1$. This is exactly the norm-attainment of $f$.
\end{proof}

\begin{example}
\label{ex:hyperplanes}
The hyperplane criterion already yields concrete non-attaining examples.
\begin{enumerate}[label=\textup{(\alph*)}]
\item Let
\[
\mu:=\frac12\Bigl(\sum_{m=1}^\infty 2^{-m}\delta_{1/m}-\delta_0\Bigr)\in M[0,1].
\]
Then $\|\mu\|=1$, but $\mu$ does not attain its norm on $C[0,1]$. Indeed, suppose that $h\in C[0,1]$ satisfies $\|h\|_\infty\leqslant 1$ and
\[
\Bigl|\int h\,d\mu\Bigr|=1.
\]
Choose $\omega\in\K$ with $|\omega|=1$ and $\int \omega h\,d\mu=1$. Then
\[
1
=
\frac12\left|\sum_{m=1}^\infty 2^{-m}(\omega h)(1/m)-(\omega h)(0)\right|
\leqslant
\frac12\left(\sum_{m=1}^\infty 2^{-m}|h(1/m)|+|h(0)|\right)
\leqslant 1.
\]
Hence equality holds throughout, forcing $(\omega h)(1/m)=1$ for all $m$ and $(\omega h)(0)=-1$, contradicting continuity. Therefore $\ker\mu\subset C[0,1]$ is a hyperplane with projection constant $2$ and no minimal projection.
\item Let $X=L_1[0,1]$ and let $f\in L_\infty[0,1]=X^*$ be given by $f(t)=t$. Then $\|f\|_\infty=1$, but $f$ does not attain its norm on $L_1[0,1]$, because for every non-zero $g\in L_1[0,1]$,
\[
\Bigl|\int_0^1 t g(t)\,dt\Bigr|\leqslant \int_0^1 t|g(t)|\,dt < \int_0^1 |g(t)|\,dt=\|g\|_1.
\]
Hence $\ker f\subset L_1[0,1]$ is again a hyperplane with projection constant $2$ and no minimal projection.
\end{enumerate}
\end{example}

\begin{corollary}
\label{cor:C01-annihilator}
Let $W\subset M[0,1]=C[0,1]^*$ be finite-dimensional, and let
\[
Y:=\{f\in C[0,1]: \langle \mu,f\rangle =0\text{ for all }\mu\in W\}.
\]
Then
\[
\lambda(Y,C[0,1])=1+\lambda(W,M[0,1]).
\]
Moreover, if $W$ admits no weak$^*$-continuous minimal projection in $M[0,1]$, then the infimum defining $\lambda(Y,C[0,1])$ is not attained.
\end{corollary}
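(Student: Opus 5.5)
The plan is to reduce the statement directly to Theorem~\ref{thm:daugavet-duality}. Two ingredients are needed.

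First, $C[0,1]$ has the Daugavet property. This is the classical Daugavet theorem, since $[0,1]$ has no isolated points; see \cite{KSSW}.

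Second, we must put $Y$ and $W$ in the form required by the theorem. Choose a basis $\mu_1,\dots,\mu_n$ of $W$ (if $W=\{0\}$ the statement is degenerate and we assume $W\neq\{0\}$). Then
\[
Y=\bigcap_{j=1}^n\ker\mu_j,
\]
because a function $f$ annihilates $W$ exactly when it annihilates each basis element. The functionals $\mu_1,\dots,\mu_n\in C[0,1]^*$ are linearly independent and satisfy $W=\spanop\{\mu_1,\dots,\mu_n\}$, so all hypotheses of Theorem~\ref{thm:daugavet-duality} hold with $X=C[0,1]$ and $X^*=M[0,1]$.

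The theorem then gives
\[
\lambda(Y,C[0,1])=1+\lambda(W,M[0,1]).
\]

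For the second assertion, the theorem also says that a minimal projection $C[0,1]\to Y$ exists if and only if a weak$^*$-continuous minimal projection $M[0,1]\to W$ exists. Taking the contrapositive: if $W$ admits no weak$^*$-continuous minimal projection, then no projection onto $Y$ has norm equal to $\lambda(Y,C[0,1])$, so the infimum is not attained.

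There is no real obstacle here. The only points needing care are recognising that $Y$ is exactly the intersection of the kernels of a basis of $W$, so that the annihilator of $Y$ is $W$ itself, and citing the Daugavet property of $C[0,1]$.
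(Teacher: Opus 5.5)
Your proposal is correct and matches the paper's proof: both use the Daugavet property of $C[0,1]$ and then apply Theorem~\ref{thm:daugavet-duality}, and your choice of a basis of $W$ just makes explicit the identification $Y=\bigcap_j\ker\mu_j$ that the paper leaves implicit. You also need not exclude $W=\{0\}$, since there $\lambda(Y,C[0,1])=1=1+0$ and the zero map is a weak$^*$-continuous minimal projection, so both assertions hold trivially.
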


\begin{proof}
The space $C[0,1]$ has the Daugavet property, so the first assertion follows from Theorem~\ref{thm:daugavet-duality}. The second assertion is immediate from the equivalence of minimal projections established there.
\end{proof}

\section{A transfer principle for piecewise-constant copies}
\label{sec:transfer}

We write $M[0,1]=C[0,1]^*$ for the space of regular Borel measures on $[0,1]$, and we identify $L_1[0,1]$ with the subspace of absolutely continuous measures.

For $N\in\N$, let
\[
I_j:=\Bigl[\frac{j-1}{N},\frac{j}{N}\Bigr)
\quad (1\leqslant j\leqslant N-1),
\qquad
I_N:=\Bigl[\frac{N-1}{N},1\Bigr].
\]
Set
\[
\nu_j := N\chi_{I_j}\in L_1[0,1]
\qquad (1\leqslant j\leqslant N).
\]
Then $\|\nu_j\|_{L_1}=1$, and the map
\[
J_N\colon \ell_1^N\to Z_N:=\spanop\{\nu_1,\dots,\nu_N\}\subset L_1[0,1],
\qquad
J_N(e_j)=\nu_j,
\]
is a surjective linear isometry.

Associated to this partition are contractive projections
\[
A_N\colon L_1[0,1]\to Z_N,
\qquad
A_Nf := \sum_{j=1}^N \Bigl(\int_{I_j} f(t)\,dt\Bigr)\nu_j,
\]
and
\[
R_N\colon M[0,1]\to Z_N,
\qquad
R_N\mu := \sum_{j=1}^N \mu(I_j)\,\nu_j.
\]
Indeed,
\[
\|R_N\mu\|_{M} = \sum_{j=1}^N |\mu(I_j)| \leqslant \|\mu\|_M.
\]

\begin{definition}
Let $V\subset \ell_1^N$ be finite-dimensional. The subspace
\[
\mathscr V := J_N(V) \subset Z_N \subset L_1[0,1] \subset M[0,1]
\]
will be called the \emph{piecewise-constant copy} of $V$.
\end{definition}

To formulate the uniqueness statement cleanly, we isolate the stability under coordinate duplication.

\begin{definition}
Let $V\subset \ell_1^N$ be finite-dimensional. For $m\in\N$, define the duplication operator
\[
D_m\colon \ell_1^N\to \ell_1^{Nm},
\qquad
D_m(x_1,\dots,x_N)
=
\Bigl(\underbrace{\tfrac{x_1}{m},\dots,\tfrac{x_1}{m}}_{m},\dots,
\underbrace{\tfrac{x_N}{m},\dots,\tfrac{x_N}{m}}_{m}\Bigr).
\]
We say that $V$ is \emph{duplication-stable} if, for every $m\in\N$, the space $D_m(V)$ has the same relative projection constant as $V$ and admits a unique minimal projection.
\end{definition}

For $m\in\N$, subdivide each $I_j$ into $m$ equal subintervals
\[
I_{j,r}^{(m)}:=\Bigl[\frac{j-1}{N}+\frac{r-1}{Nm},\frac{j-1}{N}+\frac{r}{Nm}\Bigr)
\qquad (1\leqslant j\leqslant N,\ 1\leqslant r\leqslant m),
\]
with the obvious adjustment at the endpoint $1$, and set
\[
\nu_{j,r}^{(m)}:=Nm\,\chi_{I_{j,r}^{(m)}}.
\]
Let
\[
Z_{N,m}:=\spanop\{\nu_{j,r}^{(m)}:1\leqslant j\leqslant N,\ 1\leqslant r\leqslant m\}\subset L_1[0,1].
\]
The map
\[
J_{N,m}\colon \ell_1^{Nm}\to Z_{N,m},
\qquad
J_{N,m}(e_{j,r})=\nu_{j,r}^{(m)},
\]
is a surjective isometry.

\begin{lemma}
\label{lem:ZNm-dense}
The union $\bigcup_{m\in\N} Z_{N,m}$ is dense in $L_1[0,1]$.
\end{lemma}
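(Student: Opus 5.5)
The plan is to reduce the statement to the classical density of step functions in $L_1[0,1]$, using the fact that the partitions underlying the spaces $Z_{N,m}$ have mesh $1/(Nm)\to 0$. First I would note that $Z_{N,m}$ is exactly the space of functions that are constant on each interval $I_{j,r}^{(m)}$, that is, constant on each interval $[(k-1)/(Nm),k/(Nm))$ for $1\leqslant k\leqslant Nm$ (with the endpoint adjustment at $1$). Since continuous functions are dense in $L_1[0,1]$, it suffices to approximate an arbitrary $g\in C[0,1]$ in the $L_1$-norm by elements of $\bigcup_m Z_{N,m}$.

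Fix $g\in C[0,1]$ and $\varepsilon>0$. By uniform continuity of $g$ on the compact interval $[0,1]$ there is $\delta>0$ with $|g(s)-g(t)|<\varepsilon$ whenever $|s-t|<\delta$. Choose $m$ with $1/(Nm)<\delta$, and define the step function
\[
h:=\sum_{j=1}^N\sum_{r=1}^m g(t_{j,r})\,\chi_{I_{j,r}^{(m)}}\in Z_{N,m},
\]
where $t_{j,r}$ denotes the left endpoint of $I_{j,r}^{(m)}$. Each point of $I_{j,r}^{(m)}$ lies within $1/(Nm)<\delta$ of $t_{j,r}$, so $|g-h|<\varepsilon$ pointwise. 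Hence $\|g-h\|_{L_1}\leqslant\|g-h\|_\infty<\varepsilon$, and combining this with the density of $C[0,1]$ yields the claim.

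An alternative, equally short route would use the averaging projections onto $Z_{N,m}$, which are the analogues of $A_N$ for the finer partition. These projections are contractive, so it would be enough to check that they converge to the identity on a dense set, such as continuous functions, where this is just the uniform continuity estimate above.

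There is no genuine obstacle here. The only points requiring care are that the endpoint adjustment at $1$ concerns a null set and therefore does not affect $L_1$-norms, and that the identification of $L_1[0,1]$ with absolutely continuous measures in $M[0,1]$ is isometric. Because of the latter, density holds in the measure norm as well, which is the form in which the lemma will presumably be used later.
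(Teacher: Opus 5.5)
Your proof is correct and follows essentially the same argument as the paper. You reduce to continuous functions by density, then use uniform continuity to approximate uniformly by a function constant on each cell $I_{j,r}^{(m)}$. The only difference is that the paper takes cell averages of $h$ rather than left-endpoint values, which is immaterial here.
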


\begin{proof}
Since $C[0,1]$ is dense in $L_1[0,1]$, it suffices to approximate continuous functions. Fix $h\in C[0,1]$. For $m\in\N$, let $h_m$ be the function which is constant on each $I_{j,r}^{(m)}$ and equal there to the average of $h$ over $I_{j,r}^{(m)}$. Then $h_m\in Z_{N,m}$, and the uniform continuity of $h$ implies
\[
\|h-h_m\|_\infty \to 0
\qquad (m\to\infty).
\]
Therefore $\|h-h_m\|_{L_1}\to 0$, and the proof is complete.
\end{proof}

\begin{theorem}
\label{thm:piecewise-main}
Let $V\subset \ell_1^N$ be a duplication-stable subspace of dimension $n\geqslant 2$, and let $\mathscr V\subset M[0,1]$ be its piecewise-constant copy.

\begin{enumerate}[label=\textup{(\roman*)}]
\item
\[
\lambda(\mathscr V,L_1[0,1]) = \lambda(\mathscr V,M[0,1]) = \lambda(V,\ell_1^N).
\]
\item $\mathscr V$ admits a unique minimal projection from $L_1[0,1]$ onto $\mathscr V$.
\item There is no weak$^*$-continuous minimal projection from $M[0,1]$ onto $\mathscr V$.
\end{enumerate}
\end{theorem}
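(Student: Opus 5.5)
For (i), I will sandwich the three constants. First, $\lambda(\mathscr V,L_1[0,1])\leqslant\lambda(\mathscr V,M[0,1])$: restrict any projection on $M[0,1]$ to $L_1[0,1]$. Second, $\lambda(\mathscr V,M[0,1])\leqslant\lambda(V,\ell_1^N)$: if $P_0$ is a minimal projection $\ell_1^N\to V$, then $J_NP_0J_N^{-1}R_N$ is a projection onto $\mathscr V$ of norm at most $\|P_0\|$, since $R_N$ is contractive and fixes $Z_N\supset\mathscr V$. Third, for the lower bound $\lambda(\mathscr V,L_1)\geqslant\lambda(V,\ell_1^N)$, take a projection $P\colon L_1\to\mathscr V$ and compress it to $J_N^{-1}PJ_N\colon\ell_1^N\to V$, which has norm at most $\|P\|$.

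For (ii), existence follows from the second and third steps: the projection $J_NP_0J_N^{-1}A_N$ on $L_1$ attains $\lambda(V,\ell_1^N)$. For uniqueness, let $P$ be minimal on $L_1$. For each $m$, observe that $J_{N,m}\circ D_m=J_N$, so $J_{N,m}(D_mV)=\mathscr V$. Hence $J_{N,m}^{-1}P J_{N,m}$ is a projection $\ell_1^{Nm}\to D_m(V)$ of norm at most $\|P\|=\lambda(V)=\lambda(D_mV)$, so it is minimal. By duplication-stability it equals the unique minimal projection $P_m$. The transported projection $J_{N,m}^{-1}(J_NP_0J_N^{-1}A_N)J_{N,m}$ is likewise minimal, so it also equals $P_m$. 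Therefore $P$ and $P':=J_NP_0J_N^{-1}A_N$ agree on $Z_{N,m}$ for every $m$, and by Lemma~\ref{lem:ZNm-dense} and continuity $P=P'$.

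For (iii), suppose $Q\colon M[0,1]\to\mathscr V$ is a weak$^*$-continuous minimal projection. By (i), $Q|_{L_1}$ is minimal, so (ii) gives $Q|_{L_1}=P'$. By Lemma~\ref{lem:wstar-coded}, write $Q\nu=\sum_k(\int g_k\,d\nu)\mu_k$ with $g_k\in C[0,1]$ and a basis $(\mu_k)$ of $\mathscr V$. On the other hand, $P'f=\sum_k\bigl(\sum_j a_{kj}\int_{I_j}f\bigr)\mu_k$ for constants $a_{kj}$, namely the coefficients of $P_0$. Testing against all $f\in L_1$ and using the independence of $(\mu_k)$, we get $g_k=\sum_j a_{kj}N\chi_{I_j}\cdot\frac1N$ almost everywhere, that is, $g_k$ is a.e. constant on each $I_j$. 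Continuity then forces $g_k$ to be globally constant, because adjacent intervals share endpoints. Hence $Q\nu=\nu([0,1])\sum_kc_k\mu_k$ has rank at most $1$. This contradicts $Q$ being a projection onto $\mathscr V$ with $\dim\mathscr V=n\geqslant 2$.

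The main obstacle is the uniqueness transfer in (ii), in particular verifying that the compressed operator on $\ell_1^{Nm}$ really maps into $D_m(V)$ with the correct norm identification. Once (ii) holds, step (iii) is elementary. The paper mentions a localisation argument and a complex rotation, but the direct a.e.\ identification plus continuity above should suffice for both scalar fields.
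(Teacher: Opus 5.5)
Your proposal is correct and follows the paper's proof in all essentials. It uses the same sandwich of projection constants for (i), the same uniqueness transfer for (ii) via $J_{N,m}\circ D_m=J_N$, duplication-stability and the density of $\bigcup_m Z_{N,m}$, and the same reduction of (iii) to $Q|_{L_1}=P'$ followed by a rank-one contradiction. The only deviation is in (iii), where you identify each coding function directly as $g_k=\sum_j a_{kj}\chi_{I_j}$ almost everywhere by testing against all $f\in L_1$; the paper instead compares equal-length subintervals $J,J'\subset I_j$ after a unimodular rotation, and your route is slightly shorter and works verbatim over both scalar fields.
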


\begin{proof}
Let $P_V\colon \ell_1^N\to V$ be the unique minimal projection. Define
\[
P_0 := J_N P_V J_N^{-1} A_N\colon L_1[0,1]\to \mathscr V.
\]
Then $P_0$ is a projection onto $\mathscr V$.

Since $A_N$ is contractive and $J_N$ is an isometry,
\[
\|P_0\|\leqslant \|P_V\| = \lambda(V,\ell_1^N).
\]
On the other hand, the restriction of $P_0$ to $Z_N$ equals $J_NP_VJ_N^{-1}$, so
\[
\|P_0\|\geqslant \|J_NP_VJ_N^{-1}\| = \lambda(V,\ell_1^N).
\]
Hence
\[
\|P_0\| = \lambda(V,\ell_1^N).
\]
Because $Z_N\subset L_1[0,1]$, we have
\[
\lambda(\mathscr V,L_1[0,1]) \geqslant \lambda(\mathscr V,Z_N)=\lambda(V,\ell_1^N),
\]
so in fact
\[
\lambda(\mathscr V,L_1[0,1]) = \lambda(V,\ell_1^N).
\]

Now define
\[
\widetilde P_0 := J_N P_V J_N^{-1} R_N\colon M[0,1]\to \mathscr V.
\]
Exactly the same argument yields
\[
\lambda(\mathscr V,M[0,1]) = \lambda(V,\ell_1^N),
\]
proving \textup{(i)}.

We prove uniqueness in $L_1[0,1]$. Fix $m\in\N$. Under the isometry $J_{N,m}$, the subspace $\mathscr V$ corresponds to the duplicated space $D_m(V)\subset \ell_1^{Nm}$.

Let $B_m\colon \ell_1^{Nm}\to D_m(\ell_1^N)$ be the block-averaging projection,
\[
B_m(z_{j,r})_{j,r} = \Bigl(\frac1m\sum_{r=1}^m z_{1,r},\dots,\frac1m\sum_{r=1}^m z_{1,r},\dots,
\frac1m\sum_{r=1}^m z_{N,r},\dots,\frac1m\sum_{r=1}^m z_{N,r}\Bigr).
\]
Then the restriction $P_0|_{Z_{N,m}}$ corresponds, via $J_{N,m}$, to the operator
\[
D_m P_V D_m^{-1} B_m\colon \ell_1^{Nm}\to D_m(V),
\]
which is minimal. Since $V$ is duplication-stable, this minimal projection onto $D_m(V)$ is unique.

Now let $P\colon L_1[0,1]\to \mathscr V$ be any minimal projection. If $P\neq P_0$, choose $f\in L_1[0,1]$ such that $Pf\neq P_0f$. By Lemma~\ref{lem:ZNm-dense}, there exist $m$ and $h\in Z_{N,m}$ with $Ph\neq P_0h$. Because $V$ is duplication-stable,
\[
\lambda(\mathscr V,Z_{N,m}) = \lambda(D_m(V),\ell_1^{Nm}) = \lambda(V,\ell_1^N)=\lambda(\mathscr V,L_1[0,1]).
\]
Hence both restrictions $P|_{Z_{N,m}}$ and $P_0|_{Z_{N,m}}$ have norm equal to $\lambda(\mathscr V,Z_{N,m})$; that is, both are minimal projections from $Z_{N,m}$ onto $\mathscr V$. Via $J_{N,m}$ they correspond to minimal projections from $\ell_1^{Nm}$ onto $D_m(V)$. By uniqueness, they must coincide, a contradiction. Therefore $P=P_0$, proving \textup{(ii)}.

Finally, suppose that $P\colon M[0,1]\to \mathscr V$ is a weak$^*$-continuous minimal projection. By Lemma~\ref{lem:wstar-coded}, there exist a basis $y_1,\dots,y_n$ of $\mathscr V$ and functions $g_1,\dots,g_n\in C[0,1]$ such that
\[
P\mu = \sum_{k=1}^n \Bigl(\int_0^1 g_k\, d\mu\Bigr) y_k
\qquad (\mu\in M[0,1]).
\]
Since $\|P|_{L_1}\|\leqslant \|P\| = \lambda(\mathscr V,M[0,1]) = \lambda(\mathscr V,L_1[0,1])$, the restriction $P|_{L_1}$ is a minimal projection from $L_1[0,1]$ onto $\mathscr V$. By \textup{(ii)},
\[
P|_{L_1} = P_0.
\]

Fix $k\in\{1,\dots,n\}$ and $j\in\{1,\dots,N\}$. We claim that $g_k$ is constant on $I_j$. If not, choose $s,t\in I_j$ with $g_k(s)\neq g_k(t)$. After interchanging $s$ and $t$ if necessary, we may choose $\omega\in\K$ with $|\omega|=1$ such that
\[
\Re(\omega g_k(s)) < \Re(\omega g_k(t)).
\]
By continuity, there exist disjoint subintervals $J,J'\subset I_j$ of the same length such that
\[
\sup_{u\in J} \Re(\omega g_k(u)) < \inf_{v\in J'} \Re(\omega g_k(v)).
\]
Because $J$ and $J'$ have the same length and lie in the same $I_j$,
\[
A_N(\chi_J)=A_N(\chi_{J'}),
\]
hence
\[
P_0(\chi_J)=P_0(\chi_{J'}).
\]
Since $P|_{L_1}=P_0$ and $y_1,\dots,y_n$ is a basis, this implies
\[
\int_J g_\ell(t)\,dt = \int_{J'} g_\ell(t)\,dt
\qquad (1\leqslant \ell\leqslant n).
\]
In particular,
\[
\Re\left(\omega\int_J g_k(t)\,dt\right)
=
\Re\left(\omega\int_{J'} g_k(t)\,dt\right).
\]
On the other hand,
\[
\Re\left(\omega\int_J g_k(t)\,dt\right)
=
\int_J \Re(\omega g_k(t))\,dt
<
\int_{J'} \Re(\omega g_k(t))\,dt
=
\Re\left(\omega\int_{J'} g_k(t)\,dt\right),
\]
a contradiction. Thus each $g_k$ is constant on every $I_j$, and continuity then forces each $g_k$ to be constant on all of $[0,1]$.

Write $g_k\equiv c_k$. Then for $f\in L_1[0,1]$,
\[
P_0f = \sum_{k=1}^n c_k\Bigl(\int_0^1 f(t)\,dt\Bigr) y_k.
\]
Hence the matrix of $P_0|_{\mathscr V}=\Id_{\mathscr V}$ in the basis $y_1,\dots,y_n$ has the form
\[
\bigl(c_k\!\int_0^1 y_\ell(t)\,dt\bigr)_{k,\ell=1}^n,
\]
which has rank at most $1$. Since $n\geqslant 2$, this matrix cannot be the identity. The contradiction shows that no weak$^*$-continuous minimal projection exists, proving \textup{(iii)}.
\end{proof}

\section{Explicit non-attaining families in the real space \texorpdfstring{$C[0,1]$}{C[0,1]}}
\label{sec:examples}

In this section we work over the real field. We now isolate the precise external input from the theory of regular symmetric subspaces that is needed in the applications.

\begin{proposition}
\label{prop:regular-input}
In the case of real scalars, the following assertions hold.
\begin{enumerate}[label=\textup{(\roman*)}]
\item For every $a>b>0$, there exists a two-dimensional regular symmetric space $V_{a,b}\subset \ell_1^8$ such that
\[
\lambda(V_{a,b},\ell_1^8)=\frac{a^2+ab}{a^2+b^2}.
\]
Moreover, for every $m\in\N$,
\[
\lambda(D_m(V_{a,b}),\ell_1^{8m})=\lambda(V_{a,b},\ell_1^8),
\]
and $D_m(V_{a,b})$ admits a unique minimal projection.
\item For every even $n\geqslant 4$ and every $a\geqslant 0$, there exists an $n$-dimensional regular symmetric space $V_{n,a}\subset \ell_1^{2n!2^n}$ such that
\begin{equation}
\label{eq:rho-na}
\lambda(V_{n,a},\ell_1^{2n!2^n})
= \rho_n(a)
:=
\left(
\frac{2^{n-1}}{C_n+na}
+
\frac{a}{2^{n-1}+a}
\right)^{-1},
\end{equation}
where
\[
C_n:=\sum_{\ell=0}^{n/2-1} \binom n\ell (n-2\ell).
\]
Moreover, for every $m\in\N$,
\[
\lambda(D_m(V_{n,a}),\ell_1^{2mn!2^n})=\lambda(V_{n,a},\ell_1^{2n!2^n}),
\]
and $D_m(V_{n,a})$ admits a unique minimal projection.
\end{enumerate}
\end{proposition}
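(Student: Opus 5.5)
This proposition is described as ``external input'', so the plan is to reduce it to the explicit constructions of \cite{CL1} (codimension two) and \cite{LP} (even $n\geqslant 4$). To those we add one structural observation that takes care of duplication. In both cases the construction is the same. The space $V$ is the span of $n$ vectors in $\ell_1^N$, indexed by a finite group $G$ acting on coordinates by signed permutations. The group is the dihedral-type group of order $8$ for $V_{a,b}$, and the group of signed permutations of $n$ letters of order $n!2^n$, doubled, for $V_{n,a}$. The space is regular symmetric in the Chalmers--Lewicki sense. The relative projection constant of such a space is computed by averaging any projection over $G$. This reduces the problem to a one-parameter family of $G$-invariant projections, and the optimum is the displayed value: $\frac{a^2+ab}{a^2+b^2}$ in case (i) and $\rho_n(a)$ in case (ii). Minimality and uniqueness come from the Chalmers--Metcalf criterion, in its refinement in \cite{LP}. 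One exhibits a totally norming pair $(x,y)$ with $y\in\ell_\infty$ extremal, $\|y\|=1$, $y_i x_i=|x_i|$, and with the property that the associated ``Chalmers--Metcalf operator'' is invertible on $V$. For these values, and for the uniqueness at the base level $m=1$, I would simply cite the relevant theorems of \cite{CL1,LP}.

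The new point is the duplication step. I would prove, for an arbitrary $V\subset\ell_1^N$, that $\lambda(D_m(V),\ell_1^{Nm})=\lambda(V,\ell_1^N)$, using the same argument as in Theorem~\ref{thm:piecewise-main}. The map $D_m$ multiplied by $m$ is not isometric, so I would instead use $\widetilde D_m$, which repeats each coordinate $x_j/m$ in $m$ slots. This is exactly the map $D_m$ as defined, and it is an isometry of $\ell_1^N$ onto the block-constant subspace. The block-averaging map $B_m$ is a contractive projection onto $D_m(\ell_1^N)$. Hence $D_mP_VD_m^{-1}B_m$ is a projection of norm $\lambda(V)$. Restricting any projection onto $D_m(V)$ to $D_m(\ell_1^N)$ gives the reverse inequality, so the constant is preserved.

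For uniqueness after duplication, I would transfer the Chalmers--Metcalf certificate. If $(x,y)$ certifies uniqueness for $V$, take $x^{(m)}=D_m x$ and let $y^{(m)}$ be $y$ with each coordinate repeated. Then $y^{(m)}$ is still a sign vector, it still norms $x^{(m)}$, and the Chalmers--Metcalf operator of the new pair, restricted to $D_m(V)$, is conjugate to the old one via $D_m$. So it remains invertible. Combined with the averaging over the enlarged group $G\times S_m^N$ (permutations inside blocks fix $D_m(V)$ pointwise), the sufficiency part of the Chalmers--Metcalf uniqueness theorem of \cite{LP} then gives uniqueness of the minimal projection onto $D_m(V)$.

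The main obstacle is this last uniqueness step. Uniqueness needs more than an invertible Chalmers--Metcalf operator: in \cite{LP} one also needs a condition on the zero set of $x$, or an extremality of $y$, that pins down the projection coordinatewise. Duplication multiplies the number of coordinates, so I would have to check that this condition, namely $\operatorname{supp}x$ being large enough for the regular symmetric spaces, is inherited blockwise. If the general criterion does not transfer cleanly, I would fall back on a direct argument. Let $P$ be any minimal projection onto $D_m(V)$ and average it over $S_m^N$ to obtain an invariant minimal projection, which factors through $B_m$ and hence equals $D_mP_VD_m^{-1}B_m$ by uniqueness at level one. It then remains to show that $P$ itself is invariant. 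I would get this from the strict convexity that the Chalmers--Metcalf equality case imposes on every coordinate where $x^{(m)}\neq 0$.
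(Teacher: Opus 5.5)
Your proof that $\lambda(D_m(V),\ell_1^{Nm})=\lambda(V,\ell_1^N)$ is correct. It holds for every $V$, by the same $B_m$-argument the paper uses. Your averaging step is also correct: if $P_V$ is the unique minimal projection onto $V$, then $D_mP_VD_m^{-1}B_m$ is the only minimal projection onto $D_m(V)$ that is invariant under permutations inside blocks.

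\textbf{The gap.} You leave open the step that \emph{every} minimal projection onto $D_m(V)$ is invariant. This is the whole content of the uniqueness claim, and it does not follow from uniqueness at $m=1$ together with a transferred Chalmers--Metcalf operator. The reason is that a projection onto $D_m(V)$ is constrained only through the block averages of its columns. Put $c_j:=P_Ve_j$ and $\lambda:=\lambda(V,\ell_1^N)$. If $u_{j,r}\in V$ satisfy $\sum_r u_{j,r}=0$ and $\|c_j+u_{j,r}\|_1\leqslant\lambda$, then $e_{j,r}\mapsto D_m(c_j+u_{j,r})$ defines a minimal projection onto $D_m(V)$. It differs from $D_mP_VD_m^{-1}B_m$ as soon as some $u_{j,r}\neq0$.

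Combined with your averaging step, this gives the following characterisation for $m\geqslant2$. The minimal projection onto $D_m(V)$ is unique if and only if it is unique for $V$ and each $c_j$ is an extreme point of $(c_j+V)\cap\lambda B_{\ell_1^N}$. Equivalently, $\|c_j\|_1=\lambda$ and the only vectors of $V$ supported in $\operatorname{supp}c_j$ are the multiples of $c_j$. In particular, if $\dim V\geqslant2$ and some $c_j$ has full support, uniqueness fails for every $m\geqslant2$.

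\textbf{Example.} Take $V=\ker(1,1,1)\subset\ell_1^3$. It has the unique minimal projection $z\mapsto z-\frac13(z_1+z_2+z_3)(1,1,1)$, of norm $\frac43$. Its Chalmers--Metcalf operator, with uniform weights on the pairs $(e_j,\operatorname{sgn}c_j)$, equals $\frac23\Id_V$ on $V$. Nevertheless, take $u_{1,1}=-u_{1,2}=t(0,1,-1)$ with $0<|t|\leqslant\frac13$ and all other $u_{j,r}=0$; this gives further minimal projections onto $D_2(V)$. So an argument that only transfers the Chalmers--Metcalf operator cannot give uniqueness, whether that operator is invertible or, after normalisation, the identity on $D_m(V)$. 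Some space-specific input is needed.

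\textbf{Consequences for your plan.} The ``blockwise inheritance'' you worry about is a non-issue, since the column condition is invariant under $D_m$. Your guess about it is also backwards: the columns need \emph{enough zeros}, not large support. Nor is there strict convexity in $\ell_1$ to fall back on. The Chalmers--Metcalf equality case only places each column of a competing minimal projection in the face of $\lambda B_{\ell_1^{Nm}}$ normed by $y_j^{(m)}$. The column condition is exactly what makes $D_mc_j$ an extreme point of that face intersected with $D_mc_j+D_m(V)$.

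A smaller point: $D_me_i$ is not an extreme point of $B_{\ell_1^{Nm}}$. So the transferred certificate must charge the pairs $(e_{i,r},y_i^{(m)})$ with weights $\mu_i/m$; this is what gives $E^{(m)}D_m=D_mE$.

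To finish, you must verify the column condition for the explicit minimal projections of $V_{a,b}$ and $V_{n,a}$. This is where the structure of these particular spaces enters.

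\textbf{Comparison with the paper.} The paper does not try to inherit uniqueness from $m=1$. It imports uniqueness for each duplicate directly: it applies \cite[Theorem~5]{LP} to the extension problem with $A=\Id_{D_m(V)}$ and checks the hypothesis via the computation for duplicated regular symmetric spaces in \cite[Lemma~2.3]{CL3}. Note also that the paper takes $V_{a,b}$ from \cite{CL4} and $V_{n,a}$ from \cite{CL3}, not from \cite{CL1} and \cite{LP}.
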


\begin{proof}
The two-dimensional family and its projection-constant formula are taken from \cite{CL4}. The even-dimensional family and the formula \eqref{eq:rho-na} are taken from \cite{CL3}. For these regular symmetric spaces, duplication preserves the relative projection constant by the block-averaging argument of \cite[Lemma~2.3]{CL3}. For each duplicate $D_m(V)$, where $V\in\{V_{a,b},V_{n,a}\}$, we apply \cite[Theorem~5]{LP} to the minimal extension problem with $A=\Id_{D_m(V)}$; the hypothesis to verify is that the associated Chalmers--Metcalf operator restricts to the identity on $D_m(V)$, and for these duplicated regular symmetric spaces this is exactly the computation encoded in \cite[Lemma~2.3]{CL3}. Hence every duplicate admits a unique minimal projection. This is exactly the external input used below.
\end{proof}

\subsection*{A two-dimensional family}

Let $V_{a,b}\subset \ell_1^8$ be as in Proposition~\ref{prop:regular-input}\textup{(i)}.

\begin{theorem}
\label{thm:codim2}
For every
\[
2<\Lambda\leqslant \frac{3+\sqrt2}{2}
\]
there exists a codimension-$2$ subspace $Y$ of the real space $C[0,1]$ such that
\[
\lambda(Y,C[0,1])=\Lambda,
\]
and the infimum defining $\lambda(Y,C[0,1])$ is not attained.
\end{theorem}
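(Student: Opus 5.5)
The plan is to combine Proposition~\ref{prop:regular-input}\textup{(i)} with Theorem~\ref{thm:piecewise-main} and Corollary~\ref{cor:C01-annihilator}, and then verify by elementary calculus that the range of the projection-constant formula covers the required interval.

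First, for $a>b>0$ the space $V_{a,b}\subset\ell_1^8$ is two-dimensional. By Proposition~\ref{prop:regular-input}\textup{(i)}, every duplicate $D_m(V_{a,b})$ has the same projection constant as $V_{a,b}$ and a unique minimal projection. Hence $V_{a,b}$ is duplication-stable. Let $\mathscr V_{a,b}\subset L_1[0,1]\subset M[0,1]$ be its piecewise-constant copy, taken with $N=8$. Theorem~\ref{thm:piecewise-main} then gives two facts:
\[
\lambda(\mathscr V_{a,b},M[0,1])=\frac{a^2+ab}{a^2+b^2},
\]
and $\mathscr V_{a,b}$ admits no weak$^*$-continuous minimal projection. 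Note that $\mathscr V_{a,b}$ is two-dimensional because $J_8$ is an isometry.

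Second, put $Y:=\mathscr V_{a,b}^{\perp}\cap C[0,1]$, the annihilator in $C[0,1]$. Since $\dim\mathscr V_{a,b}=2$, this subspace has codimension $2$. Corollary~\ref{cor:C01-annihilator} now yields
\[
\lambda(Y,C[0,1])=1+\frac{a^2+ab}{a^2+b^2},
\]
and it also shows that the infimum defining $\lambda(Y,C[0,1])$ is not attained.

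Third, it remains to check that $\{1+\frac{a^2+ab}{a^2+b^2}: a>b>0\}$ contains the interval $(2,\frac{3+\sqrt2}{2}]$. Write $t=b/a\in(0,1)$ and set
\[
\phi(t)=\frac{1+t}{1+t^2}.
\]
The boundary values are $\phi(0)=1$ and $\phi(1)=1$. The derivative has numerator $(1+t^2)-2t(1+t)=1-2t-t^2$, which vanishes at $t_0=\sqrt2-1\in(0,1)$. Using $1+t_0^2=4-2\sqrt2$, we get
\[
\phi(t_0)=\frac{\sqrt2}{4-2\sqrt2}=\frac{\sqrt2(4+2\sqrt2)}{8}=\frac{1+\sqrt2}{2}.
\]
So $\phi$ is continuous on $(0,t_0]$, tends to $1$ as $t\to0^+$, and equals $\frac{1+\sqrt2}{2}$ at $t_0$. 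By the intermediate value theorem, $\phi$ attains every value in $(1,\frac{1+\sqrt2}{2}]$ on $(0,t_0]$. Therefore $1+\phi$ attains every value in $(2,\frac{3+\sqrt2}{2}]$. Given $\Lambda$ in this interval, choose $t\in(0,t_0]$ with $1+\phi(t)=\Lambda$, then take $a=1$ and $b=t$, which satisfy $a>b>0$.

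I expect no serious obstacle, since all the substantive input has been isolated in Proposition~\ref{prop:regular-input} and Theorem~\ref{thm:piecewise-main}. The only points requiring care are confirming that $\dim V_{a,b}=2\geqslant2$, so that Theorem~\ref{thm:piecewise-main}\textup{(iii)} applies, and the small calculus check of the range of $\phi$ above.
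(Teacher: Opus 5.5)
Your proof is correct and follows the paper's argument essentially step for step. Proposition~\ref{prop:regular-input}\textup{(i)} gives duplication-stability of $V_{a,b}$; Theorem~\ref{thm:piecewise-main} gives the projection constant and the absence of weak$^*$-continuous minimal projections for the piecewise-constant copy; Corollary~\ref{cor:C01-annihilator} passes to the annihilator in $C[0,1]$; and the values are covered via the range of $t\mapsto(1+t)/(1+t^2)$ with $t=b/a$. Your explicit evaluation at $t_0=\sqrt2-1$ and the intermediate value argument on $(0,t_0]$ simply fill in the ``straightforward calculation'' that the paper leaves to the reader.
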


\begin{proof}
Let
\[
\rho(t):=\frac{1+t}{1+t^2}
\qquad (0<t<1).
\]
Then $\rho$ is continuous, and a straightforward calculation shows that it attains its maximum at $t=\sqrt2-1$, with
\[
\max_{0<t<1}\rho(t)=\frac{1+\sqrt2}{2}.
\]
Hence for every
\[
1<r\leqslant \frac{1+\sqrt2}{2}
\]
there exist $a>b>0$ such that
\[
\lambda(V_{a,b},\ell_1^8)=r.
\]
By Proposition~\ref{prop:regular-input}\textup{(i)}, the space $V_{a,b}$ is duplication-stable. Let $\mathscr V_{a,b}\subset M[0,1]$ be the piecewise-constant copy of $V_{a,b}$, and define
\[
Y:=\mathscr V_{a,b}^{\perp}\subset C[0,1].
\]
By Theorem~\ref{thm:piecewise-main},
\[
\lambda(\mathscr V_{a,b},M[0,1]) = r,
\]
and $\mathscr V_{a,b}$ admits no weak$^*$-continuous minimal projection. Therefore Corollary~\ref{cor:C01-annihilator} yields
\[
\lambda(Y,C[0,1]) = 1+r,
\]
and the infimum is not attained. Since $\dim \mathscr V_{a,b}=2$, the space $Y$ has codimension $2$.
\end{proof}

\subsection*{An even-codimensional family}

Fix an even integer $n\geqslant 4$ and $a\geqslant 0$, and let $V_{n,a}\subset \ell_1^{2n!2^n}$ be as in Proposition~\ref{prop:regular-input}\textup{(ii)}.

It is convenient to record the endpoint value in probabilistic form.

\begin{lemma}
\label{lem:Cn-rademacher}
Let $n\geqslant 2$ be even, let
\[
\Omega_n:=\{-1,1\}^n,
\qquad
\Prob_n(A):=2^{-n}\#A
\quad (A\subset \Omega_n),
\]
and let $\varepsilon_j(\omega):=\omega_j$ for $\omega=(\omega_1,\dots,\omega_n)\in\Omega_n$. Equivalently, $\varepsilon_1,\dots,\varepsilon_n$ are independent Rademacher random variables on the product probability space $(\Omega_n,\Prob_n)$. Then
\[
\beta_n
:=
\Exp_{\Prob_n}\Bigl|\sum_{j=1}^n \varepsilon_j\Bigr|
=
\frac{C_n}{2^{n-1}}
=
n2^{-n}\binom{n}{n/2}.
\]
In particular,
\[
\beta_n\sim \sqrt{\frac{2n}{\pi}}
\qquad (n\to\infty,\ n\text{ even}).
\]
\end{lemma}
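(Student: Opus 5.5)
We plan to prove the lemma by a direct counting computation followed by a telescoping identity, and to obtain the asymptotics from Stirling's formula. Write $S=\sum_{j=1}^n\varepsilon_j$. If exactly $\ell$ of the $\omega_j$ equal $-1$, then $S(\omega)=n-2\ell$, and there are $\binom n\ell$ such $\omega$. Hence
\[
\Exp_{\Prob_n}|S| = 2^{-n}\sum_{\ell=0}^n \binom n\ell |n-2\ell|.
\]
The substitution $\ell\mapsto n-\ell$ shows the terms are symmetric, and the term $\ell=n/2$ vanishes, so the sum equals $2\sum_{\ell=0}^{n/2-1}\binom n\ell(n-2\ell)=2C_n$. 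This gives $\beta_n=C_n/2^{n-1}$.

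For the closed form we need $C_n=\tfrac n2\binom n{n/2}$. We use the identity
\[
\binom n\ell (n-2\ell) = (n-\ell)\binom n\ell - \ell\binom n\ell = n\binom{n-1}{\ell} - n\binom{n-1}{\ell-1},
\]
which follows from $(n-\ell)\binom n\ell=n\binom{n-1}\ell$ and $\ell\binom n\ell=n\binom{n-1}{\ell-1}$. Summing over $0\leqslant\ell\leqslant n/2-1$ telescopes to $C_n=n\binom{n-1}{n/2-1}$. Since $\binom{n-1}{n/2-1}=\tfrac12\binom n{n/2}$ by the symmetry $\binom{n-1}{n/2-1}=\binom{n-1}{n/2}$ together with Pascal's rule, we get $C_n=\tfrac n2\binom n{n/2}$. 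Therefore $\beta_n=n2^{-n}\binom n{n/2}$. As a sanity check, $n=2$ gives $C_2=2$ and $\beta_2=1=\Exp|\varepsilon_1+\varepsilon_2|$.

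For the asymptotics, Stirling's formula gives the standard central binomial estimate $\binom{n}{n/2}\sim 2^n\sqrt{2/(\pi n)}$. Hence $\beta_n\sim n\sqrt{2/(\pi n)}=\sqrt{2n/\pi}$. The only step requiring care is the telescoping identity, specifically getting the boundary term at $\ell=0$ right, where $\binom{n-1}{-1}=0$. Everything else is routine.
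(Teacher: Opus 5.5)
Your proposal is correct and follows the paper's proof essentially step for step. Both use the binomial counting $\Prob_n(S_n=n-2\ell)=2^{-n}\binom n\ell$, symmetry to reduce to $2C_n$, the telescoping identity $(n-2\ell)\binom n\ell=n\binom{n-1}{\ell}-n\binom{n-1}{\ell-1}$, the relation $\binom{n}{n/2}=2\binom{n-1}{n/2-1}$, and Stirling's formula for the asymptotics.
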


\begin{proof}
Set
\[
S_n:=\varepsilon_1+\cdots+\varepsilon_n.
\]
The law of $S_n$ is the classical binomial law of the simple symmetric random walk; see, for example, \cite[Chapter~III, Section~2]{Feller}. For completeness we recall the counting argument. For $0\leqslant \ell\leqslant n$, the event $\{S_n=n-2\ell\}$ consists exactly of those sign-vectors in $\Omega_n$ having precisely $\ell$ coordinates equal to $-1$. Therefore
\[
\Prob_n(S_n=n-2\ell)=2^{-n}\binom n\ell
\qquad (0\leqslant \ell\leqslant n).
\]
Using symmetry and the fact that $n$ is even, we obtain
\[
\Exp_{\Prob_n}|S_n|
=
2^{-n}\sum_{\ell=0}^n \binom n\ell |n-2\ell|
=
2^{1-n}\sum_{\ell=0}^{n/2-1} \binom n\ell (n-2\ell)
=
\frac{C_n}{2^{n-1}}.
\]
Moreover,
\[
(n-2\ell)\binom n\ell
=
n\binom{n-1}{\ell}
-
n\binom{n-1}{\ell-1}
\qquad (0\leqslant \ell\leqslant n/2-1),
\]
with the convention $\binom{n-1}{-1}=0$. Hence the sum telescopes:
\[
C_n
=
n\sum_{\ell=0}^{n/2-1}
\left(
\binom{n-1}{\ell}-\binom{n-1}{\ell-1}
\right)
=
n\binom{n-1}{n/2-1}.
\]
Since
\[
\binom{n}{n/2}=2\binom{n-1}{n/2-1},
\]
it follows that
\[
\beta_n=\frac{C_n}{2^{n-1}}=n2^{-n}\binom{n}{n/2}.
\]
Finally, Stirling's formula yields
\[
\binom{n}{n/2}\sim 2^n\sqrt{\frac{2}{\pi n}},
\]
and therefore
\[
\beta_n\sim \sqrt{\frac{2n}{\pi}}.
\qedhere
\]
\end{proof}

\begin{theorem}
\label{thm:even-codim}
Let $n\geqslant 4$ be even, and put
\[
\beta_n:=\frac{C_n}{2^{n-1}}=\Exp_{\Prob_n}\Bigl|\sum_{j=1}^n \varepsilon_j\Bigr|,
\]
with expectation taken with respect to the product probability measure $\Prob_n$ from Lemma~\ref{lem:Cn-rademacher}. Then for every
\[
2<\Lambda\leqslant 1+\beta_n
\]
there exists a codimension-$n$ subspace $Y$ of the real space $C[0,1]$ such that
\[
\lambda(Y,C[0,1])=\Lambda,
\]
and the infimum defining $\lambda(Y,C[0,1])$ is not attained.
\end{theorem}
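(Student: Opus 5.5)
The argument runs parallel to the proof of Theorem~\ref{thm:codim2}, with the family $V_{a,b}$ replaced by $V_{n,a}$ from Proposition~\ref{prop:regular-input}\textup{(ii)}. The only new work is to show that the range of $\rho_n$ on $[0,\infty)$ covers the interval $(1,\beta_n]$.

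\textbf{Step 1: the range of $\rho_n$.} Evaluating at $a=0$ gives $\rho_n(0)=C_n/2^{n-1}=\beta_n$, by Lemma~\ref{lem:Cn-rademacher}. As $a\to\infty$ we have
\[
\frac{2^{n-1}}{C_n+na}\to 0
\qquad\text{and}\qquad
\frac{a}{2^{n-1}+a}\to 1,
\]
so $\rho_n(a)\to 1$. The function $\rho_n$ is continuous on $[0,\infty)$. By the intermediate value theorem, every $r\in(1,\beta_n]$ equals $\rho_n(a)$ for some $a\geqslant 0$. Monotonicity of $\rho_n$ is not needed; the interval $(1,\beta_n]$ lies between the endpoint value and the limit, which suffices. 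Note also that $\beta_n>1$ for $n\geqslant 4$; for instance $\beta_4=3/2$. It follows that $\rho_n(a)>1$ for $a$ near $0$, and no further claim about $\rho_n$ is required.

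\textbf{Step 2: transfer to $C[0,1]$.} Fix $\Lambda\in(2,1+\beta_n]$ and put $r=\Lambda-1$. Choose $a\geqslant 0$ with $\lambda(V_{n,a},\ell_1^{2n!2^n})=r$.

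By Proposition~\ref{prop:regular-input}\textup{(ii)}, $V_{n,a}$ is duplication-stable. Since $n\geqslant 2$, Theorem~\ref{thm:piecewise-main} applies to its piecewise-constant copy $\mathscr V_{n,a}\subset M[0,1]$. It gives $\lambda(\mathscr V_{n,a},M[0,1])=r$, and it shows there is no weak$^*$-continuous minimal projection onto $\mathscr V_{n,a}$.

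Set $Y:=\mathscr V_{n,a}^{\perp}\subset C[0,1]$. Corollary~\ref{cor:C01-annihilator} then gives
\[
\lambda(Y,C[0,1])=1+r=\Lambda,
\]
and the infimum defining $\lambda(Y,C[0,1])$ is not attained. Since $\dim\mathscr V_{n,a}=n$, the subspace $Y$ has codimension $n$.

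\textbf{Main obstacle.} The only point requiring care is Step 1, namely verifying that the range of $\rho_n$ reaches down to values arbitrarily close to $1$ and up to exactly $\beta_n$. The limit computation and the evaluation at $a=0$ settle this directly.
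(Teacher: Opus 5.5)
Your proposal is correct and follows the paper's proof essentially step for step. In both, the intermediate value theorem is applied to $\rho_n$ using $\rho_n(0)=\beta_n$ and $\rho_n(a)\to 1$, and the result then follows from duplication-stability in Proposition~\ref{prop:regular-input}\textup{(ii)}, Theorem~\ref{thm:piecewise-main}, and Corollary~\ref{cor:C01-annihilator}, with codimension $n$ coming from $\dim\mathscr V_{n,a}=n$.
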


\begin{proof}
Fix such a $\Lambda$ and put $r:=\Lambda-1\in(1,\beta_n]$. The function $\rho_n$ defined in \eqref{eq:rho-na} is continuous on $[0,\infty)$, satisfies
\[
\rho_n(0)=\frac{C_n}{2^{n-1}}=\beta_n,
\qquad
\lim_{a\to\infty}\rho_n(a)=1,
\]
so there exists $a\geqslant 0$ such that $\rho_n(a)=r$.

By Proposition~\ref{prop:regular-input}\textup{(ii)}, the space $V_{n,a}$ is duplication-stable. Let $\mathscr V_{n,a}\subset M[0,1]$ be the piecewise-constant copy of $V_{n,a}$, and let
\[
Y:=\mathscr V_{n,a}^{\perp}\subset C[0,1].
\]
By Theorem~\ref{thm:piecewise-main},
\[
\lambda(\mathscr V_{n,a},M[0,1]) = r,
\]
and $\mathscr V_{n,a}$ has no weak$^*$-continuous minimal projection. Corollary~\ref{cor:C01-annihilator} now yields
\[
\lambda(Y,C[0,1])=1+r=\Lambda,
\]
with non-attainment. Since $\dim \mathscr V_{n,a}=n$, the codimension of $Y$ equals $n$.
\end{proof}

\begin{corollary}
\label{cor:all-lambda->2}
For every $\Lambda>2$ there exists a finite-codimensional subspace $Y$ of the real space $C[0,1]$ such that
\[
\lambda(Y,C[0,1])=\Lambda,
\]
and the infimum defining $\lambda(Y,C[0,1])$ is not attained.
\end{corollary}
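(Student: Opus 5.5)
Given $\Lambda>2$, we put $r:=\Lambda-1>1$ and choose the codimension $n$ according to the size of $r$, then invoke Theorem~\ref{thm:even-codim}. Everything rests on the endpoint values $\beta_n$ growing without bound. By Lemma~\ref{lem:Cn-rademacher}, $\beta_n\sim\sqrt{2n/\pi}$ as $n\to\infty$ through even integers, so $\beta_n\to\infty$. If an elementary argument is preferred over the asymptotic, note that
\[
\beta_{n+2}/\beta_n=\frac{n+1}{n}\cdot\frac{n+2}{n+2}\cdots
\]
can be computed directly from $\beta_n=n2^{-n}\binom{n}{n/2}$; it simplifies to $(n+1)/n>1$, whose partial products diverge. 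Either way, for every $r>1$ there is an even $n\geqslant 4$ with $\beta_n\geqslant r$.

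With such an $n$ fixed, we have $2<\Lambda=1+r\leqslant 1+\beta_n$. Theorem~\ref{thm:even-codim} then supplies a codimension-$n$ subspace $Y\subset C[0,1]$ with $\lambda(Y,C[0,1])=\Lambda$ and non-attained infimum. For values $\Lambda\leqslant(3+\sqrt2)/2$ one could instead use Theorem~\ref{thm:codim2}, but this is not needed.

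The main obstacle is only ensuring that the intervals $(2,1+\beta_n]$ for even $n\geqslant 4$ cover $(2,\infty)$. They are nested, since $\beta_n$ is increasing, and the first has $\beta_4=4\cdot 6/16=3/2>1$, so they cover everything once $\beta_n\to\infty$. That divergence is given by the Stirling asymptotic in Lemma~\ref{lem:Cn-rademacher}, so no further work is required.

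Finally, the endpoint $\Lambda=2$ appearing in Theorem~\ref{thm:intro-main} is not part of this corollary. It is handled separately, for example by Example~\ref{ex:hyperplanes}(a) via Corollary~\ref{cor:hyperplane}, which gives a hyperplane with projection constant $2$ and no minimal projection.
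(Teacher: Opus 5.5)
Your proposal is correct and follows the paper's proof: you use $\beta_n\to\infty$ along even $n$ (Lemma~\ref{lem:Cn-rademacher}) to choose an even $n\geqslant 4$ with $\Lambda\leqslant 1+\beta_n$, and then apply Theorem~\ref{thm:even-codim}. Your optional elementary check also holds: $\beta_{n+2}/\beta_n=(n+1)/n$ (your displayed intermediate product is garbled, but this final value is right), and $\prod_{n\text{ even}}(1+1/n)$ diverges because $\sum_{n\text{ even}}1/n=\infty$.
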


\begin{proof}
By Lemma~\ref{lem:Cn-rademacher}, the numbers $\beta_n$ tend to $\infty$ along the even integers. Choose an even $n\geqslant 4$ such that $\Lambda\leqslant 1+\beta_n$. Then apply Theorem~\ref{thm:even-codim}.
\end{proof}

\begin{corollary}
\label{cor:all-lambda>=2}
For every $\Lambda\in[2,\infty)$ there exists a finite-codimensional subspace $Y$ of the real space $C[0,1]$ such that
\[
\lambda(Y,C[0,1])=\Lambda,
\]
and the infimum defining $\lambda(Y,C[0,1])$ is not attained.
\end{corollary}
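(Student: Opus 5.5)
The statement splits into $\Lambda>2$ and $\Lambda=2$. For $\Lambda>2$ there is nothing new to do: Corollary~\ref{cor:all-lambda->2} already provides a finite-codimensional $Y\subset C[0,1]$ with $\lambda(Y,C[0,1])=\Lambda$ and no minimal projection. So the only new case is the endpoint $\Lambda=2$.

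For $\Lambda=2$ I will use a hyperplane. By Corollary~\ref{cor:hyperplane}, every hyperplane $\ker\mu$ of the real space $C[0,1]$ has projection constant exactly $2$. That corollary also says a minimal projection onto $\ker\mu$ exists if and only if $\mu$ attains its norm on $C[0,1]$.

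It therefore suffices to exhibit a non-norm-attaining measure, and Example~\ref{ex:hyperplanes}(a) does this:
\[
\mu=\tfrac12\Bigl(\sum_{m\geqslant1}2^{-m}\delta_{1/m}-\delta_0\Bigr).
\]
That example was written for general $\K$, so in particular it applies over $\R$. The argument there is that equality in the triangle inequality forces $h(1/m)=1$ for all $m$ and $h(0)=-1$ (after multiplying by a unimodular scalar). This contradicts the continuity of $h$ at $0$.

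Taking $Y=\ker\mu$ gives a codimension-one subspace with $\lambda(Y,C[0,1])=2$ whose infimum is not attained. Combined with the case $\Lambda>2$, this covers all of $[2,\infty)$. I expect no real obstacle here. The only point to check is that Corollary~\ref{cor:hyperplane} applies, which requires that real $C[0,1]$ has the Daugavet property; this is standard, since $[0,1]$ has no isolated points.
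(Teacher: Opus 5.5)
Your proposal is correct and follows the paper's own proof exactly: the case $\Lambda>2$ is handed off to Corollary~\ref{cor:all-lambda->2}, and the endpoint $\Lambda=2$ is settled by the non-norm-attaining hyperplane $\ker\mu$ of Example~\ref{ex:hyperplanes}(a) via Corollary~\ref{cor:hyperplane}. Your check that real $C[0,1]$ has the Daugavet property is the only hypothesis needed, and it holds.
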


\begin{proof}
For $\Lambda>2$, this is Corollary~\ref{cor:all-lambda->2}. For $\Lambda=2$, use Example~\ref{ex:hyperplanes}(a).
\end{proof}

\section{Further remarks and questions}
\label{sec:remarks}

\begin{remark}
For a codimension-$n$ subspace $Y$ of a Daugavet space, Corollary~\ref{thm:daugavet-bounds} gives the general upper bound
\[
\lambda(Y,X)\leqslant 1+\sqrt n.
\]
Our even-codimensional family reaches every value up to
\[
1+\beta_n,
\qquad
\beta_n\sim \sqrt{\frac{2n}{\pi}}.
\]
Thus the explicit non-attaining examples produced here are asymptotically optimal up to the universal factor $\sqrt{2/\pi}$. It would be very interesting to know whether the gap can be closed in $C[0,1]$ by replacing the regular symmetric spaces with less rigid finite-dimensional models.

For $n\in\N$, let
\[
\lambda_n:=\sup\{\lambda(E): \dim E=n\},
\]
where $\lambda(E)$ denotes the absolute projection constant of $E$, that is,
\[
\lambda(E):=\sup\{\lambda(E,Z): E\subset Z \text{ isometrically}\}.
\]
The numbers $\lambda_n$ are classical and notoriously difficult to determine; see \cite{Grunbaum,KLL,BB22,BB23}.
\end{remark}

\begin{question}
Fix $n\in\N$. Does every value in the full interval
\[
(2,1+\lambda_n]
\]
occur as the projection constant of a codimension-$n$ subspace $Y$ of the real space $C[0,1]$ for which no minimal projection exists?
\end{question}

\begin{remark}
If such a space $Y$ existed with
\[
\lambda(Y,C[0,1])=1+\lambda_n,
\]
then Theorem~\ref{thm:daugavet-duality} would force its annihilator
\[
W:=Y^\perp\subset M[0,1]
\]
to satisfy
\[
\lambda(W,M[0,1])=\lambda_n.
\]
Thus $W$ would have to realise the maximal absolute projection constant $\lambda_n$ inside $M[0,1]$. When $\lambda_n<\sqrt n$, this is weaker than extremality for the Kadec--Snobar upper bound, but it is still a very rigid requirement in finite-dimensional projection-constant theory; see, for instance, \cite{Grunbaum,KLL,BB22,BB23}. The endpoint problem therefore asks for an annihilator which is simultaneously extremal for the finite-dimensional projection-constant problem and yet admits no weak$^*$-continuous minimal projection.
\end{remark}

\begin{remark}
The Daugavet duality of Theorem~\ref{thm:daugavet-duality} applies to every Banach space with the Daugavet property. What is specific to $C[0,1]$ is the piecewise-constant transfer in Section~\ref{sec:transfer}. It would be natural to seek analogous constructions in other dual Daugavet spaces, for instance in $M(K)$ for perfect compact spaces $K$, or in duals of $L_1$-spaces, where one might hope to obtain further weak$^*$ non-attainment phenomena for finite-codimensional subspaces. A different amplification mechanism appears in \cite{KL26}, where zero-sum subspaces in finite $\ell_\infty$-sums multiply relative projection constants by explicit factors while preserving non-attainment. It would be interesting to understand whether a Daugavet-space analogue of that construction exists.
\end{remark}

\end{document}